\documentclass[reqno]{amsart}
\usepackage{tikz}
\usetikzlibrary{patterns} 
\usepackage{pgfplots}
\usepackage{amsfonts,amsmath,amsthm,amssymb,mathrsfs}
\usepackage{color}
\usepackage{amsmath,amssymb,amsfonts, bm}
\usepackage{graphicx}
\usepackage{newunicodechar}
\usepackage{xcolor}
\usepackage{geometry}
\geometry{left=3.2cm,right=3.2cm}
\numberwithin{equation}{section}
\usepackage[pdfstartview=FitH,colorlinks=true]{hyperref}
\hypersetup{urlcolor=red, linkcolor=blue,citecolor=red}
\allowdisplaybreaks

\theoremstyle{plain}

\pagestyle{plain}

\newtheorem{thm}{Theorem}[section]
\newtheorem{definition}[thm]{Definition}
\newtheorem{prop}[thm]{Proposition}
\newtheorem{cor}[thm]{Corollary}
\newtheorem{lemma}[thm]{Lemma}

 \numberwithin{equation}{section}

\begin{document}
\title[]
{Dispersive decay for the Inter-critical nonlinear Schr\"{o}dinger equation in $\mathbb{R}^3$}

\author[]
{Boyu Jiang, Jiawei Shen, Kexue Li*}

\address{Boyu Jiang
 \newline\indent
 School of Mathematics and Statistics, Xi'an Jiaotong University, Xi'an ,710049, Shanxi,China
 \newline\indent
 Jiawei Shen
 \newline\indent
 School of Mathematics and Computational Science, Xiangtan
University, Xiangtan, 411105, Hunan, China
 \newline\indent
 Kexue Li
 \newline\indent
 School of Mathematics and Statistics, Xi'an Jiaotong University,Xi'an 710049, Shanxi,China
 \newline\indent
 }
 \email{ jiangboyu@stu.xjtu.edu.cn; sjiawei633@xtu.edu.cn; kxli@mail.xjtu.edu.cn}

\begin{abstract}
This paper investigates the Cauchy problem for the nonlinear Schrödinger equation (NLS) in the mass-supercritical and energy-subcritical regime within three spatial dimensions. For initial data in the critical homogeneous Sobolev space $\dot{H}^{s_c}(\mathbb{R}^3)$ (where $s_c = \frac{5}{6}$), we get a uniform decay estimate for the long-time dynamics of solutions, which extends the previous results.
\end{abstract}

\maketitle

\section{Introduction}
In this paper, we primarily discuss the $H^{s_c}$-critical nonlinear Schr\"{o}dinger equation (NLS). The general form of the equation is as follows, where $s_c=\frac{d}{2}-\frac{2}{p}$. The equation takes the form:
\begin{equation} \label{NLS}
i u_t + \Delta u \pm |u|^{p} u = 0, \
u(0, x) = u_0(x) \in \dot{H}^{s_c}(\mathbb{R}^{d}).
\end{equation}
When the sign of the nonlinear term is positive, it is referred to as the focusing case, and when it is negative, it is referred to as the defocusing case. The solution of equation \ref{NLS} remains invariant under the scaling
\begin{equation}
u(t,x) \to u_\lambda(t,x) = \lambda^{\frac{2}{p}} u(\lambda^2 t, \lambda x) \quad \text{for } \lambda > 0,
\end{equation}
and the initial value transforms into
\begin{equation}
u(0) \to u_\lambda(0) := \lambda^{\frac{2}{p}} u_0(\lambda x) \quad \text{for } \lambda > 0.
\end{equation}
Then the scaling leaves $\dot{H}^{s_c}$ norm invariant, that is
$$\| u \|_{\dot{H}^{s_c}} = \| u_\lambda \|_{\dot{H}^{s_c}},$$
which is called the critical regularity $s_c$. This is also considered to be the minimal regularity of the initial data required to guarantee the well-posedness of equation \eqref{NLS}. In fact, Christ, Colliander, and Tao\cite{ill-posedness} have already proved that there exist some initial data belonging to $H^s(\mathbb{R}^d)$ with $s < s_c$ that cause \eqref{NLS} to be ill-posed. \par
The $H^1$ solutions of equation \ref{NLS} satisfy the conservation laws of mass, momentum, and energy, which read
\begin{align*}
M(u(t)) &:= \int |u(t,x)|^2 dx = M(u_0), \\
P(u(t)) &:= \operatorname{Im} \int \overline{u(t,x)} \nabla u(t,x) \, dx = P(u_0), \\
E(u(t)) &:= \int |\nabla u(t,x)|^2 dx + \frac{2\mu}{p+2} \int |u(t,x)|^{p+2} dx = E(u_0).
\end{align*}
For the Cauchy problem of equation \eqref{NLS}, the well-posedness and scattering theory (with initial data in $H^s(\mathbb{R}^n)$) have been extensively studied. Local well-posedness can be derived from standard fixed-point theorems: for all $u_0 \in H^s(\mathbb{R}^d)$, there exists a $T_0 > 0$ such that its solution $u \in C([0,T_0), H^s(\mathbb{R}^d))$ (strong solution). In fact, this $T_0$ depends on $\|u_0\|_{H^s(\mathbb{R}^d)}$ when $s > s_c$. When $s = s_c$, it also depends on the profile of $u_0$. Some results can be found in Cazenave and Weissler \cite{CAZENAVE1990807}.\par
These techniques can be directly applied to prove the global well-posedness of equation \eqref{NLS} with small initial data in $H^s(\mathbb{R}^d)$ ($s \ge s_c$). The main theorem is stated as follows, see \cite{Miao2023}
\begin{thm}(Well-posedness) \label{well-posedness}
    Let $d \ge 1$, and $u_0(x) \in H^{s_c}$. Furthermore, assume $0 \le s_c \le 1$. If $\||\nabla|^{s_c}u_0\|_{L^2} < \delta(d)$ is sufficiently small, then $u(t)$ does not blow up forward or backward in time (global existence holds). That is, scattering holds, and
    \begin{equation}  \label{origin}
        \||\nabla|^{s_c}u\|_{L^q_t(\mathbb{R}) L^r_x(\mathbb{R}^{d})} \lesssim
        \||\nabla|^{s_c}u_0\|_{L^2(\mathbb{R}^d)},
    \end{equation}
    where $(q, r)$ is a Schrödinger admissible pair.
\end{thm}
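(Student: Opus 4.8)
The plan is to realize the solution as a fixed point of the Duhamel map and to close the argument entirely within a scaling-critical Strichartz space. First I would rewrite \eqref{NLS} as the equivalent integral (Duhamel) equation
\begin{equation*}
u(t) = e^{it\Delta}u_0 \pm i\int_0^t e^{i(t-s)\Delta}\bigl(|u|^p u\bigr)(s)\,ds =: \Phi(u)(t),
\end{equation*}
and look for a fixed point of $\Phi$. The natural ambient space is one whose norm is invariant under the scaling of \eqref{NLS}: fix a Schr\"odinger-admissible pair $(q,r)$ (chosen below so that a single Sobolev embedding converts the nonlinearity back into the space-time norm) and set
\begin{equation*}
X := \Bigl\{ u \in C\bigl(\mathbb{R};\dot{H}^{s_c}\bigr) : \bigl\||\nabla|^{s_c}u\bigr\|_{L^q_t L^r_x} \le \rho \Bigr\},
\end{equation*}
equipped with a metric $d(u,v)$ measured in a weaker, undifferentiated Strichartz norm, the radius $\rho \sim \delta$ to be fixed at the end.

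For the homogeneous part, the Strichartz estimate gives immediately $\||\nabla|^{s_c}e^{it\Delta}u_0\|_{L^q_t L^r_x} \lesssim \||\nabla|^{s_c}u_0\|_{L^2} < \delta$, since $|\nabla|^{s_c}$ commutes with the linear flow. For the Duhamel part I would apply the inhomogeneous Strichartz estimate,
\begin{equation*}
\Bigl\||\nabla|^{s_c}\!\int_0^t e^{i(t-s)\Delta}\bigl(|u|^p u\bigr)\,ds\Bigr\|_{L^q_t L^r_x} \lesssim \bigl\||\nabla|^{s_c}\bigl(|u|^p u\bigr)\bigr\|_{L^{\tilde q'}_t L^{\tilde r'}_x},
\end{equation*}
for a suitable dual admissible pair $(\tilde q,\tilde r)$, and then estimate the right-hand side. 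This is the crux: using the fractional Leibniz (product) rule together with the fractional chain rule for the non-smooth nonlinearity $|u|^p u$, I would bound it schematically by $\||u|^p\|\,\||\nabla|^{s_c}u\|$, distribute the factors by H\"older in $t$ and $x$, and finally use the Sobolev embedding $\dot W^{s_c,r}\hookrightarrow L^{r^\ast}$ to return to the defining norm, obtaining $\lesssim \||\nabla|^{s_c}u\|_{L^q_t L^r_x}^{p+1}$. The exponents are forced by the requirement that admissibility and the scaling relation $s_c = \tfrac{d}{2}-\tfrac{2}{p}$ hold simultaneously; it is exactly at the critical regularity that the powers balance.

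Combining these, $\Phi$ maps $X$ into itself provided $C\delta + C\rho^{p+1}\le \rho$, which holds for $\rho = 2C\delta$ once $\delta$ is small enough that $(2C)^{p+1}\delta^p \le 1$. For the contraction, I would use the pointwise difference bound $\bigl||u|^pu-|v|^pv\bigr|\lesssim \bigl(|u|^p+|v|^p\bigr)|u-v|$ --- which is why $X$ is metrized by the undifferentiated norm, so as to avoid passing $|\nabla|^{s_c}$ through the non-smooth difference --- to obtain $d(\Phi u,\Phi v)\le C\rho^p\,d(u,v)\le \tfrac12 d(u,v)$ for $\delta$ small. The Banach fixed-point theorem then yields a unique global solution satisfying \eqref{origin}, and the bound on every admissible pair follows by reinserting the fixed point into the Strichartz estimates. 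Finally, scattering is read off from the integral equation: $u_+ := u_0 \pm i\int_0^{\infty} e^{-is\Delta}(|u|^p u)\,ds$ converges in $\dot H^{s_c}$ by the same nonlinear estimate, and $\|u(t)-e^{it\Delta}u_+\|_{\dot H^{s_c}}\to 0$ as $t\to +\infty$, with backward scattering identical.

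The main obstacle is the nonlinear estimate. Since $p$ need not be an even integer, the map $z\mapsto |z|^p z$ is not smooth, so I would invoke the fractional chain rule in the form valid for merely H\"older-regular nonlinearities; this is admissible because $0\le s_c\le 1$ and $p>0$ guarantee the regularity of $|z|^pz$ required for that rule. The remaining delicacy is purely bookkeeping: choosing $(q,r)$ and $(\tilde q,\tilde r)$ so that H\"older's inequality, admissibility, the Sobolev embedding, and the scaling identity are mutually consistent, so that the nonlinear term closes at precisely the power $p+1$.
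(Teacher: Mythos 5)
The paper does not prove Theorem \ref{well-posedness}; it is quoted from \cite{Miao2023} (see also \cite{CAZENAVE1990807}). Your proposal --- contraction in a scaling-critical Strichartz space via the inhomogeneous Strichartz estimate, the fractional chain rule for the non-smooth nonlinearity, and Sobolev embedding, with the metric taken in an undifferentiated norm, followed by the standard Cauchy-in-time argument for scattering --- is precisely the standard argument given in those references, so it is correct and takes essentially the same approach.
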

In the mass-supercritical and energy subcritical case $\frac{4}{d}<p<\frac{4}{d-2}$, when the solution is considered in the energy space $H^1(\mathbb{R}^d),$  we can use local theory together with conservation to yield global well-posedness for any initial data $u_0\in H^1(\mathbb{R}^d)$ in the defocusing case and some special focusing cases.  Furthermore, scattering theory under the same conditions was studied by Ginibre and Velo \cite{Duyckaerts2008,Ginibre1985}.  However, in the mass-critical and energy-critical case, it is not easy to obtain global well-posedness because conservation does not imply the global existence of the solutions.  The problem was firstly considered by Bourgain \cite{Bourgain1999}, and he considered the radial function under the energy-critical and defocusing case.  After that, many author studied the case, and please read \cite{Keel1998,Ryckman2007,Visan2007,Visan2012}. On the other hand,  the focusing case was also considered by Kenige and Merle\cite{Kenig2006} when the initial data is a radial function, and about non-radial case, please refer to \cite{Killip2011,Dodson2012,Killip2010}. In the mass-critial, Killip, Tao, Visan \cite{Killip2008} first  studied the global well-posedeness and scattering with radial data in dimension two, and  Killip, Visan, Zhang\cite{Killip2008}  consider it in dimension higher than two. About non-radial case, please read a series of papers of Dodson\cite{Dodson2012,Dodson2015,Dodson2016}.\par
When considering the general nonlinear Schrödinger equation in the critical space $\dot{H}^{s_c}(\mathbb{R}^d)$, more complex situations arise. Recently, results on conditional global existence and scattering under the assumption that $u \in L_t^\infty(I, \dot{H}_x^{s_c}(\mathbb{R}^d))$ (where $I$ denotes the maximal lifespan interval) have also been studied by many authors. This topic of research began with the papers \cite{EKenig2007ScatteringFH, Nondispersive} and was subsequently refined in works such as \cite{dimensionfive, spacedimensions, Scattering, Blow-up, Radialsolutionstoenergysupercriticalwaveequationsinodddimensions, Largedata, Theradialmass-subcritical, Energy-supercritical, Theradialdefocusing, Thefocusingenergy-critical, Thedefocusingenergy-supercritical, IntercriticalNLS, Theradialdefocusingnonlinear, Globalwell-posednessandscattering}. Specifically, if the initial data $u_0 \in \dot{H}^{s_c}(\mathbb{R}^d)$ and the solution satisfies an a priori estimate
\begin{equation}
    \sup_{0 < t < T_{\mathrm{out}}(u_0)} \| u \|_{\dot{H}_x^{s_c}(\mathbb{R}^d)} < +\infty,
\end{equation}
then $T_{\mathrm{out}}(u_0) = +\infty$, and the solution scatters in $\dot{H}^{s_c}(\mathbb{R}^d)$, where $[0, T_{\mathrm{out}}(u_0))$ is the maximal forward lifespan of the solution. Hence, these results establish a blow-up criterion where the lifespan depends only on the critical norm $\| u \|_{L_t^\infty \dot{H}_x^{s_c} (I \times \mathbb{R}^d)}$. However, we have limited understanding regarding large initial data problems if we only require $u_0 \in \dot{H}^{s_c}(\mathbb{R}^d)$. Subsequently, many scholars have approached global solutions with large initial data from a probabilistic perspective. Specifically, they construct a broad class of initial data sets with supercritical regularity such that global solutions exist. For more details, refer to\cite{Periodicnonlinear,Invariantmeasures,Randomdata,RandomdataCauchy,Probabilisticwell-posedness,Almostsurewell-posedness,Two-dimensionalnonlinear,Invariantmeasuresandlongtime,Almostsurescattering,RandomdataCauchytheory,Invariantweighted,Almostsurewell-posednessfortheperiodic,Ontheprobabilisticwell-posedness,Probabilisticglobalwell-posednessoftheenergy-critical,Almostsureglobalwell-posedness,Probabilisticglobalwell-posedness,RandomdataCauchyproblemforsupercritical}. \par
In our article, we consider the nonlinear Schrödinger equation in the mass-supercritical and energy-subcritical regime (inter-critical), with initial data belonging to the critical space $\dot{H}^{s_c}(\mathbb{R}^d)$. We study the long-time behavior of solutions to the following equation:
\begin{equation} \label{eq}
\begin{cases} 
i \partial_t u + \Delta u \pm |u|^{3} u=0 \\
u(0, x) = u_0(x) \in \dot{H}^{\frac{5}{6}{}}(\mathbb{R}^d).
\end{cases}
\end{equation}
Prior to the advent of Strichartz estimates, our understanding of the long-time behavior of solutions relied on a different expression of dispersion, namely, quantitative decay pointwise in time. For the linear Schrödinger equation, this is exemplified by the classical dispersive
estimate
\begin{equation} \label{dispersive}
    \| e^{it\Delta} u_0 \|_{L_x^p(\mathbb{R}^d)} \lesssim |t|^{-d\left( \frac{1}{2} - \frac{1}{r} \right)} \| u_0 \|_{L_x^{p'}} \quad \text{for } t \neq 0 \text{ and } 2 \leq p \leq \infty.
\end{equation}
The requirement that $u_0 \in L^{p'}$ constitutes a spatial concentration requirement for the initial data. This is what breaks the time translation symmetry and so makes a quantitative decay estimate possible.
Our main theorem is as follows:
\begin{thm} \label{main thm}
    Fix $2 < p < \infty$. Given $u_0 \in \dot{H}^{\frac{5}{6}}(\mathbb{R}^3) \cap L^{p'}(\mathbb{R}^3)$, we denote by $u$ the global solution generated by Theorem \ref{well-posedness}. Then we have the following estimate:
    \begin{equation} \label{main eq}
        \sup_{t \neq 0} |t|^{3\left( \frac{1}{2} - \frac{1}{p} \right)} \| u(t) \|_{L_x^p} \leq C \left( \| u_0 \|_{\dot{H}_x^{\frac{5}{6}}} \right) \| u_0 \|_{L_x^{p'}}.
    \end{equation}
\end{thm}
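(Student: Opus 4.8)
The plan is to run a continuity (bootstrap) argument on the weighted quantity
\[
M(T) := \sup_{0 < t \le T} t^{\beta}\,\|u(t)\|_{L^p_x}, \qquad \beta := 3\Big(\tfrac12 - \tfrac1p\Big),
\]
and to exploit the smallness of $\varepsilon := \||\nabla|^{5/6}u_0\|_{L^2}$ that is already assumed in Theorem \ref{well-posedness}. By the time-reversal symmetry $u(t,x)\mapsto \overline{u(-t,x)}$ it suffices to treat $t>0$. After checking (a standard consequence of the local theory and of the $L^{p'}\to L^p$ smoothing of the free flow) that $M(T)$ is finite and continuous, I would start from the Duhamel formula
\[
u(t) = e^{it\Delta}u_0 \mp i\int_0^t e^{i(t-s)\Delta}\big(|u|^3u\big)(s)\,ds .
\]
The free part is handled at once by the linear dispersive estimate \eqref{dispersive}, which gives $t^{\beta}\|e^{it\Delta}u_0\|_{L^p_x}\lesssim \|u_0\|_{L^{p'}_x}$. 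Everything then reduces to bounding the Duhamel integral by $\varepsilon^{3}\,t^{-\beta}M(T)$, so that the resulting inequality $M(T)\lesssim \|u_0\|_{L^{p'}_x} + \varepsilon^{3} M(T)$ closes for $\varepsilon$ small, yielding \eqref{main eq} with a constant depending on $\|u_0\|_{\dot H^{5/6}_x}$.

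For the nonlinear term I would insert the dispersive estimate inside the integral, $\|e^{i(t-s)\Delta}F(u(s))\|_{L^p_x}\lesssim |t-s|^{-\beta}\,\||u(s)|^4\|_{L^{p'}_x}$, and then split the quartic nonlinearity by H\"older so that \emph{exactly one} copy of $u$ is measured in $L^p_x$ and the other three in the auxiliary exponent $r_0=\tfrac{3p}{p-2}$, i.e.
\[
\||u|^4\|_{L^{p'}_x}\lesssim \|u\|_{L^p_x}\,\|u\|_{L^{r_0}_x}^3,\qquad \tfrac1{p'}=\tfrac1p+\tfrac3{r_0}.
\]
The single $L^p_x$ factor supplies the decay $\|u(s)\|_{L^p_x}\le M(T)\,s^{-\beta}$ (and hence the $\|u_0\|_{L^{p'}_x}$ dependence through the closed bootstrap), while the three $L^{r_0}_x$ factors are controlled uniformly by the global Strichartz bounds of Theorem \ref{well-posedness}: combining Sobolev embedding with \eqref{origin} gives $\|u\|_{L^{q_0}_s L^{r_0}_x}\lesssim \||\nabla|^{5/6}u\|_{L^{\tilde q}_sL^{\tilde r}_x}\lesssim \varepsilon$ for a suitable admissible pair. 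This is exactly what produces the product structure $\|u_0\|_{L^{p'}_x}\cdot\varepsilon^3$ demanded by \eqref{main eq}.

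The core estimate is then the time integral
\[
\int_0^t |t-s|^{-\beta}\,s^{-\beta}\,\|u(s)\|_{L^{r_0}_x}^3\,ds ,
\]
whose two kernels are singular at $s=t$ and $s=0$. I would decompose $[0,t]$ into $[0,t/2]$ and $[t/2,t]$ (or perform a full dyadic splitting). On $[0,t/2]$ one has $|t-s|\sim t$, so the factor $t^{-\beta}$ is pulled out and the remaining $\int_0^{t/2}s^{-\beta}\|u\|_{L^{r_0}_x}^3\,ds$ is closed by H\"older in time against the small spacetime norm; on $[t/2,t]$ one has $s\sim t$ and the same is done with the roles of the kernels reversed. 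Matching the H\"older exponents so that (i) the weights are integrable, (ii) the output is exactly $t^{-\beta}$, and (iii) a clean factor $\varepsilon^3$ survives, forces a specific choice of $r_0$ and of the admissible pair used for the Strichartz control.

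I expect the genuine obstacle to lie in the regime of large $p$, precisely $p\ge 6$, where $\beta\ge1$ and the kernel $|t-s|^{-\beta}$ is no longer locally integrable across the diagonal $s=t$; there the naive insertion of \eqref{dispersive} into the Duhamel integral simply fails. In that range I would instead estimate the near-diagonal block by an inhomogeneous Strichartz/Sobolev argument on each dyadic time interval, using the unitarity of $e^{i\tau\Delta}$ together with the critical embedding $\dot H^{5/6}(\mathbb{R}^3)\hookrightarrow L^{9/2}$, and recovering the $t^{-\beta}$ factor by scaling rather than by pointwise decay; alternatively one first proves decay for the integrable range $2<p<6$ and then reaches $p\ge 6$ by interpolation against the uniform-in-time bound $\|u(t)\|_{L^{9/2}_x}\lesssim\|u(t)\|_{\dot H^{5/6}_x}\lesssim\varepsilon$. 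Threading these two mechanisms together uniformly over the whole range $2<p<\infty$, and in particular handling the endpoints $p\to2^+$ and $p\to\infty$ where the available admissible pairs and embeddings degenerate, is the delicate point; once the Duhamel integral is bounded by $C\varepsilon^3 t^{-\beta}M(T)$ the bootstrap closes and \eqref{main eq} follows.
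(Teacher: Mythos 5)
Your treatment of the integrable range $2<p\le 6$ is essentially the paper's argument: Duhamel, the dispersive estimate, H\"older with one factor in $L^p_x$ and three in $L^{3p/(p-2)}_x$, the splitting at $s=t/2$, and a weak-type H\"older pairing in time of $s^{-\beta}\in L^{1/\beta,\infty}_s$ against a small spacetime norm. (The paper obtains the smallness by chopping $[0,\infty)$ into finitely many intervals on which a Lorentz--Strichartz norm is $<\eta$ and iterating over the intervals, rather than by invoking smallness of $\|u_0\|_{\dot H^{5/6}}$; under the small-data hypothesis of Theorem \ref{well-posedness} your version of the closure is legitimate, if less robust.)

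The genuine gap is in the range $p>6$. You identify only half of the obstruction there: besides the diagonal singularity $|t-s|^{-\beta}$ at $s=t$, your own substitution $\|u(s)\|_{L^p_x}\le s^{-\beta}M(T)$ plants a second non-integrable kernel $s^{-\beta}$ at $s=0$ on the early-time block $[0,t/2]$, where $|t-s|\sim t$ gives no help; once $\beta\ge 1$ the weak-type pairing used in the integrable case has no admissible exponents left. Your first remedy (dyadic blocks, Sobolev/Strichartz near the diagonal) is in the right spirit for the \emph{late-time} block --- the paper applies the embedding $\dot W^{1,\frac{3p}{p+3}}\hookrightarrow L^p$ before the dispersive estimate so that the kernel becomes $|t-s|^{-\frac{p-6}{2p}}$, which is integrable --- but it does not touch the early-time problem. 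Your fallback, interpolating the $2<p<6$ decay against the uniform bound $\|u(t)\|_{L^{9/2}_x}\lesssim\|u(t)\|_{\dot H^{5/6}_x}$, cannot work: interpolation between $L^{9/2}$ and $L^{p_0}$ with $p_0<6$ only produces exponents strictly between $9/2$ and $p_0$, hence never reaches $p>6$. What is actually needed on $[0,t/2)$ --- and what the paper supplies in Lemma \ref{4} --- is a bound for $\int_0^{t/2}\|\,|u|^3u\,\|_{L^{p'}_x}\,ds$ that produces the factor $\|u_0\|_{L^{p'}}$ directly rather than through $M(T)$: one decomposes $u=e^{is\Delta}u_0+u_{\mathrm{nl}}$, controls the free part via the frequency-split estimate $\|e^{is\Delta}f\|_{L^4_sL^{4q}_x}\lesssim\|f\|_{L^q}^{1/4}\|f\|_{H^{5/6}}^{3/4}$ of Lemma \ref{5}, and controls the nonlinear correction in Lorentz spacetime norms with second index below $2$ via Corollary \ref{Nonlinear_correction_bounds}. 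None of this machinery, or a substitute for it, appears in your proposal, so the case $p>6$ remains unproved.
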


\subsection*{Notation}  Throughout this paper, $C$ is a postive finite constant which is independent of  the essential variables. $ A\lesssim B$ mean $A\leq CB$ for some constant. If both $A\lesssim B$ and $B\lesssim A,$ then we donate $A\sim B.$  We will use $ X\hookrightarrow Y$ to mean a continuous embedding, that, for two function spaces $X,Y,$  there an inclusion map $X\rightarrow Y$ with $\|f\|_Y\lesssim\|f\|_X.$

Our conventions for the Fourier transform are \[
\hat{f}(\xi)=\frac{1}{\sqrt{2\pi}}\int e^{-i\xi\cdot x}f(x)dx \text{ and } f(x)=\frac{1}{\sqrt{2\pi}}\int e^{-i\xi\cdot x}\hat{f}(\xi)d\xi. 
\]
We get that Fourier transform is  a unitary on $L^2$ with $\|\hat{f}\|_{L^2}=\|f\|_{L^2}.$ If $f(x,t)$ is defined on both space and time, we use $\hat{f}(\xi,t)$ to denote the fourier transform on spatial variable.

\section{some setting and bascial Lemma}
In this section, we will introduce two kind of function space:    
\subsection{Homogeneous Sobolev Space}
   Now, we begin to introduce the homogeneous Sobolev space $\dot{H}^s$ definded by 
   \[
   \dot{H}^s:=\{f \text{ is temped distribution and  } \|f\|_{\dot{H}^s} \le \infty\}
   \]
   where $\|f\|_{\dot{H}^s}=\int |\xi|^{2s} |\hat{f}(\xi)d\xi.$ Now, in order to be convenience, we introduce the Litterwood-Paley decomposition to study $\dot{H}^s.$ Let $\phi$ be smooth function supported in $B(0,2)$ and $\phi\equiv 1$ when $\xi\le 1.$  So, for dyadic $2^N\in 2^\mathbb{Z},$  we then define $P_{\le N}, P_{N}$ and  $P_{> N}$

   \begin{align*}
      & \widehat{P_{\le N} f}(\xi)=\phi(\xi/N)\hat{f}(\xi)\\
        &\widehat{P_{ N} f}(\xi)=(\phi(\xi/N)-\phi(2\xi/N))\hat{f}f(\xi)\\
         &\widehat{P_{ >N} f}(\xi)=(1-\phi(\xi/N))\hat{f}(\xi)
   \end{align*}
To be simple, we can also write $f_{\le N}=P_{\le N}, f_{N}= P_{N}f$ and $f_{>N}=P_{ >N} f.$   It is easy to check that
\[
\|\|f_N\|_{\ell^2_N}\|_{L^p}\sim \|f\|_{L^p}
\]
for $1<p<\infty.$  For a function $f(x,t)$ defined on both space and time, we also use $ P_{N}f(x,t)$ to denote Littlewood-Paley projection of $f$ in only the spatial variable $x.$ 
\begin{thm}[Bernstein inequality]\label{Bernstein_inequality}
Let $1\le q\le p\le \infty,$  and $P_{\le N}, P_{N}$ is Littlewood-Paley operator. For any $s\ge0,$ we have 
\begin{equation}\begin{split}\label{intro/bernstein}
    \||\nabla|^s P_N f\|_{L^p} & \approx N^s \|P_N f\|_{L^p} \\
    \||\nabla|^s P_{\leq N} f\|_{L^p} & \lesssim N^s \|P_N f\|_{L^p} \\
    \|P_N f\|_{L^p},\|P_{\leq N} f\|_{L^p} & \lesssim N^{d(\frac{1}{q} - \frac{1}{p})} \|P_N f\|_{L^q}.
\end{split}\end{equation}

\end{thm}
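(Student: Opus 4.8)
The plan is to derive all three estimates from a single mechanism: each operator appearing on the left is a Fourier multiplier, and after rescaling the frequency variable by $N$ its symbol becomes independent of $N$. The associated convolution kernel is then a fixed function dilated to scale $N^{-1}$, so that every inequality reduces to Young's convolution inequality, with all powers of $N$ produced purely by the scaling of the kernel. Concretely, write $\psi(\xi)=\phi(\xi)-\phi(2\xi)$, so that $\widehat{P_N f}(\xi)=\psi(\xi/N)\hat f(\xi)$, and let $K$ be the inverse Fourier transform of $\psi$. Since $\psi$ is smooth and compactly supported, $K$ is Schwartz; a change of variables shows $P_N f=K_N * f$ with $K_N(x)=N^d K(Nx)$, and $\|K_N\|_{L^1}=\|K\|_{L^1}$ is independent of $N$. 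The same remark applies to $P_{\le N}$ with $\phi$ in place of $\psi$.

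For the first estimate I would introduce a fattened projection $\tilde P_N$, i.e.\ a multiplier $\tilde\psi(\xi/N)$ equal to $1$ on the annulus $\{|\xi|\sim N\}$ that supports $\psi(\cdot/N)$, so that $\tilde P_N P_N=P_N$. On this annulus $|\xi|^s$ is smooth, and the composite symbol of $|\nabla|^s\tilde P_N$ equals $N^s\, m(\xi/N)$ with $m(\eta)=|\eta|^s\tilde\psi(\eta)$ smooth, compactly supported, and vanishing near the origin, hence Schwartz. Therefore $|\nabla|^s P_N f=N^s\,(\check m_N * P_N f)$ with $\check m_N(x)=N^d\check m(Nx)$, and Young's inequality gives $\||\nabla|^s P_N f\|_{L^p}\lesssim N^s\|P_N f\|_{L^p}$. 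Running the identical argument with the symbol $|\xi|^{-s}\tilde\psi(\xi/N)$, which is again smooth on the annulus, produces the reverse bound $N^s\|P_N f\|_{L^p}\lesssim\||\nabla|^s P_N f\|_{L^p}$, and together these yield the equivalence $\approx$.

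For the low-frequency estimate the only genuinely new point is that the symbol $\phi(\xi/N)$ is supported \emph{through} the origin, where $|\xi|^s$ fails to be smooth for non-integer $s$. I expect this origin singularity to be the main obstacle. I would handle it by noting that $m_0(\eta)=|\eta|^s\phi(\eta)$, although only H\"older continuous at $\eta=0$, is compactly supported and has enough Sobolev regularity that its inverse Fourier transform is still integrable; thus $|\nabla|^s P_{\le N} f=N^s(\check m_{0,N} * P_{\le N} f)$ and Young's inequality closes the bound exactly as before, with the $L^1$ norm of the rescaled kernel again independent of $N$.

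Finally, for the gain-of-integrability estimate I would apply Young's inequality in the form $\|K_N * f\|_{L^p}\le\|K_N\|_{L^r}\|f\|_{L^q}$ with $1+\tfrac1p=\tfrac1r+\tfrac1q$, which forces $1-\tfrac1r=\tfrac1q-\tfrac1p\ge0$ since $q\le p$. Because $K_N(x)=N^d K(Nx)$, a change of variables gives $\|K_N\|_{L^r}=N^{d(1-1/r)}\|K\|_{L^r}=N^{d(\frac1q-\frac1p)}\|K\|_{L^r}$, and $\|K\|_{L^r}<\infty$ as $K$ is Schwartz. After inserting a fattened projection on the right, this yields $\|P_N f\|_{L^p}\lesssim N^{d(\frac1q-\frac1p)}\|P_N f\|_{L^q}$, and the identical computation with $\phi$ replacing $\psi$ gives the corresponding bound for $P_{\le N}$.
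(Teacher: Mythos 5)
The paper states this Bernstein inequality as a known result and gives no proof of its own, so there is nothing to compare against; judged on its own terms, your argument is the standard kernel-rescaling proof and is essentially correct. Two small points. First, in the low-frequency estimate the identity $|\nabla|^s P_{\le N}f = N^s(\check m_{0,N}*P_{\le N}f)$ with $m_0(\eta)=|\eta|^s\phi(\eta)$ is not quite right as written: the symbol of the right-hand side is $|\xi|^s\phi(\xi/N)^2$, not $|\xi|^s\phi(\xi/N)$. You should either convolve against $f$ itself, or (as you already do in the annular case) insert a fattened cutoff $\tilde\phi$ equal to $1$ on the support of $\phi$ and take $m_0(\eta)=|\eta|^s\tilde\phi(\eta)$. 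The substantive claim, that this kernel is in $L^1$ uniformly in $N$ despite $|\eta|^s$ failing to be smooth at the origin for non-integer $s$, is correct, and your Sobolev-regularity remark can be made precise: $|\eta|^s\tilde\phi(\eta)$ lies in $H^\sigma$ for every $\sigma<s+\tfrac d2$ (its order-$k$ derivatives behave like $|\eta|^{s-k}$ near the origin), and any compactly supported $H^\sigma$ function with $\sigma>\tfrac d2$ has integrable inverse Fourier transform by Cauchy--Schwarz against $\langle x\rangle^{-\sigma}$; since $s>0$ such a $\sigma$ exists. (A dyadic decomposition of the symbol near the origin, giving kernels with $L^1$ norms $O(2^{-ks})$, works equally well.) Second, the theorem as printed in the paper is literally false: the right-hand sides of its second and third lines should read $\|P_{\le N}f\|$ rather than $\|P_N f\|$ in the $P_{\le N}$ estimates (take $\hat f$ supported in $\{|\xi|\le N/100\}$, so that $P_N f=0$ while $P_{\le N}f=f$). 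Your proof silently establishes the corrected, standard version, which is the one the paper actually uses.
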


\subsection{ Lorentz Space}

In this section, we will offer some basic knowledge about Lorentz space.  About some detail, please read \cite{Grafakos2014}
\begin{definition}[Lorentz space] \label{Lorentz_Space}
Fix \( d \geq 1 \), \( 1 \leq p < \infty \), and \( 0 < q \leq \infty \). The Lorentz space \( L^{p,q} \)
is the space of measurable functions \( f: \mathbb{R}^d \to \mathbb{C} \) which have finite quasinorm
\begin{align}
    \| f \|_{L^{p,q}(\mathbb{R}^d)} = \begin{cases}
\left( p^{1/q} \left\| \lambda \left| \{ x \in \mathbb{R}^d : |f(x)| > \lambda \} \right|^{1/p} \right\|_{L^q((0,\infty), \frac{d\lambda}{\lambda})} \right)
\end{cases},
\end{align}

where \( |\ast| \) denotes the Lebesgue measure on \( \mathbb{R}^d \).

\end{definition}

It follows that \( L^{p,q} \) is a quasi-Banach space for any \( 1 \leq p < \infty \) and \( 0 < q \leq \infty \).  Furthermore,  \( 1 < p < \infty \) and \( 1 \leq q \leq \infty \), we find that
\[
\| f \|_{L^{p,q}} \sim_{p,q} \sup_{\| g \|_{L^{p',q'}} = 1} \left| \int f(x) \overline{g(x)} dx \right|
\]
where \( p', q' \) are the respective H\"{o}lder conjugates. Therefore for all \( 1 < p < \infty \) and \( 1 \leq q \leq \infty \), it follows that \( L^{p,q} \) is normable. In the case of \( p = q \), \( L^{p,p}(\mathbb{R}^d) \) coincides with the standard Lebesgue space \( L^p(\mathbb{R}^d) \). By the Definition \ref{Lorentz_Space},  it is easy to check that \[
\bigl\| |x|^{-d/p} \bigr\|_{L^{p,\infty}(\mathbb{R}^d)} \sim_d 1, \tag{2.2}
\]
and hence \( |x|^{-d/p} \in L^{p,\infty}(\mathbb{R}^d) \) for all \( p \geq 1 \). This is the extent to which we will use the exact form of (2.1).

In the same manner as the sequence spaces \( \ell^q \), the Lorentz spaces \( L^{p,q} \) satisfy a nesting property in the second index \( q \). In particular, we have the continuous embedding \( L^{p,q_1} \hookrightarrow L^{p,q_2} \), i.e.
\[
\| * \|_{L^{p,q_2}} \lesssim_{p,q_1,q_2} \| * \|_{L^{p,q_1}},
\]
for all \( 0 < q_1 \leq q_2 \leq \infty \).

Lorentz spaces arise most naturally as real interpolation spaces between the usual \( L^p \) spaces. This is achieved through the Hunt interpolation inequality, otherwise known as the off-diagonal Marcinkiewicz interpolation theorem; see \cite{Grafakos2014}. We recall a specific case of the theorem here:
    
\begin{lemma}[Hunt interpolation] \label{Hunt_interpolation}
Fix \( 1 \leq p_1, p_2, q_1, q_2 \leq \infty \) such that \( p_1 \neq p_2 \) and \( q_1 \neq q_2 \). Let \( T \) be a sublinear operator which satisfies
\[
\| Tf \|_{L^{p_i}} \lesssim_{p_i,q_i} \| f \|_{L^{q_i}}
\]
for \( i \in \{1,2\} \). Then for all \( \theta \in (0,1) \) and all \( 0 < r \leq \infty \),
\[
\| Tf \|_{L^{p_\theta,r}} \lesssim_{p_0,q_0,\theta} \| f \|_{L^{q_\theta,r}}
\]
where \( \frac{1}{p_\theta} = \frac{\theta}{p_1} + \frac{1-\theta}{p_2} \) and \( \frac{1}{q_\theta} = \frac{\theta}{q_1} + \frac{1-\theta}{q_2} \).
\end{lemma}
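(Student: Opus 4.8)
The plan is to prove this off-diagonal Marcinkiewicz statement directly through the decreasing rearrangement and Hardy's inequalities, rather than by invoking abstract real interpolation; the latter would be circular here, since identifying the Lorentz scale as the interpolation scale of the $L^p$ spaces is exactly what this lemma is meant to justify. First I would weaken the hypotheses to weak-type endpoint bounds: by the nesting embedding $L^{p_i}=L^{p_i,p_i}\hookrightarrow L^{p_i,\infty}$ recalled above, the assumption $\|Tf\|_{L^{p_i}}\lesssim_{p_i,q_i}\|f\|_{L^{q_i}}$ gives $\|Tf\|_{L^{p_i,\infty}}\lesssim\|f\|_{L^{q_i}}$, i.e.
\[
(Tf)^*(t)\ \lesssim\ t^{-1/p_i}\,\|f\|_{L^{q_i}},\qquad i=1,2 .
\]
After relabeling I may assume $q_1<q_2$. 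Throughout I would work with the rearrangement form of the Lorentz norm, $\|g\|_{L^{p,r}}\sim\big\|t^{1/p}g^*(t)\big\|_{L^r((0,\infty),\,dt/t)}$, so that the target reduces to estimating $\big\|t^{1/p_\theta}(Tf)^*(t)\big\|_{L^r(dt/t)}$.

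The core is a pointwise bound on $(Tf)^*$. For each $t>0$ I choose a splitting level and write $f=g_u+h_u$ with $g_u=f\,\mathbf{1}_{\{|f|>f^*(u)\}}$ (large values) and $h_u=f-g_u$ (small values), where the parameter $u=u(t)$ will be calibrated below. Since $|Tf|\le|Tg_u|+|Th_u|$ by sublinearity, subadditivity of the rearrangement together with the two weak-type bounds yields
\[
(Tf)^*(2t)\ \le\ (Tg_u)^*(t)+(Th_u)^*(t)\ \lesssim\ t^{-1/p_1}\|g_u\|_{L^{q_1}}+t^{-1/p_2}\|h_u\|_{L^{q_2}} .
\]
Because $q_1<q_2$, the large values sit comfortably in the smaller exponent and the small values in the larger one, and rewriting the two norms through $f^*$ gives (up to the harmless contribution of the level set $\{|f|=f^*(u)\}$)
\[
\|g_u\|_{L^{q_1}}^{q_1}\le\int_0^{u} f^*(s)^{q_1}\,ds,\qquad \|h_u\|_{L^{q_2}}^{q_2}\lesssim\int_{u}^{\infty} f^*(s)^{q_2}\,ds .
\]

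It then remains to insert this into the $L^r(dt/t)$ norm and choose $u=u(t)$ as a suitable power of $t$ so that the two resulting integral operators acting on $f^*$ carry matching homogeneity. With this calibration the first term becomes an averaging operator $t^{\alpha_1}\big(\int_0^{u(t)}f^*(s)^{q_1}ds\big)^{1/q_1}$ with a negative scaling exponent and the second a co-averaging operator $t^{\alpha_2}\big(\int_{u(t)}^{\infty}f^*(s)^{q_2}ds\big)^{1/q_2}$ with a positive one; each is controlled by the corresponding Hardy inequality on $(0,\infty)$ with weight $dt/t$ (the averaging Hardy inequality for the first, its dual for the second). Both collapse to $\big\|t^{1/q_\theta}f^*(t)\big\|_{L^r(dt/t)}\sim\|f\|_{L^{q_\theta,r}}$, which is the asserted right-hand side; the endpoint $r=\infty$ is handled identically with the Hardy inequalities replaced by their elementary supremum analogues.

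The main obstacle I expect is precisely this calibration and the bookkeeping of exponents in the last step. One must verify that the split $u(t)$ can be chosen so that the scalings $1/p_\theta-1/p_1$ and $1/p_\theta-1/p_2$ pair correctly with the $q_i$-Hardy exponents and have the right signs, with constants depending only on $\theta$ and the four endpoint exponents. This is where the nondegeneracy hypotheses $p_1\neq p_2$ and $q_1\neq q_2$ are indispensable: they force the relevant Hardy exponents to be nonzero, so that each Hardy inequality is quantitative and the interpolation actually gains. Some care is also needed when an endpoint exponent equals $\infty$, in which case the corresponding $L^{q_i}$ norm of $g_u$ or $h_u$ is read as an essential supremum and the matching rearrangement integral is replaced by a pointwise value of $f^*$; these degenerate cases follow the same scheme once the Hardy step is adjusted accordingly.
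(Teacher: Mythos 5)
The paper does not actually prove this lemma: it is recalled as a special case of the off-diagonal Marcinkiewicz theorem with a citation to \cite{Grafakos2014}, so there is no internal proof to compare against. Your sketch is, in substance, the standard proof found in that reference: reduce the strong-type hypotheses to weak-type ones via the nesting $L^{p_i}\hookrightarrow L^{p_i,\infty}$, split $f$ at the height $f^*(u(t))$ with a power-law calibration $u(t)=t^{\gamma}$ (solvable precisely because $p_1\neq p_2$ and $q_1\neq q_2$), and close with Hardy-type inequalities in $L^r(dt/t)$ --- this is correct in outline and is the right non-circular route, as you observe. Two details deserve to be carried through explicitly rather than waved off. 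First, the decreasing rearrangement of $h_u=f\,\mathbf{1}_{\{|f|\le f^*(u)\}}$ is $\min\bigl(f^*(s),f^*(u)\bigr)$, so $\|h_u\|_{L^{q_2}}^{q_2}\le u\,f^*(u)^{q_2}+\int_u^{\infty}f^*(s)^{q_2}\,ds$; the extra term $u\,f^*(u)^{q_2}$ is \emph{not} dominated by the tail integral in general (take $f^*$ vanishing just past $u$), but after substituting $u=u(t)$ it produces a diagonal term comparable to $\|t^{1/q_\theta}f^*(t)\|_{L^r(dt/t)}$, which is exactly the claimed right-hand side --- harmless, as you say, but only after this computation. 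Second, the lemma asserts the range $0<r\le\infty$, and for $0<r<1$ the integral-form Hardy inequalities you invoke fail for general integrands; they do hold here because the integrands are built from the monotone function $f^*$ (alternatively one runs a dyadic block decomposition in $t$), and since your write-up leans on "the averaging Hardy inequality and its dual" without qualification, this is the one genuine soft spot in an otherwise sound plan.
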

Lorentz spaces enjoy many of the standard estimates used in the Lebesgue spaces \( L^p \). In particular, H\"older's inequality carries over in the following form:

\begin{lemma}[H\"older's inequality] \label{H_inequality}
Given \( 1 \leq p, p_1, p_2 \leq \infty \) and \( 0 < q, q_1, q_2 \leq \infty \) such that \( \frac{1}{p} = \frac{1}{p_1} + \frac{1}{p_2} \) and \( \frac{1}{q} = \frac{1}{q_1} + \frac{1}{q_2} \),
\[
\| fg \|_{L^{p,q}} \lesssim_{d,p,q} \| f \|_{L^{p_1,q_1}} \| g \|_{L^{p_2,q_2}}.
\]
\end{lemma}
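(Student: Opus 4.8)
The plan is to reduce the Lorentz quasinorm to its equivalent formulation in terms of the decreasing rearrangement, at which point the inequality follows from a submultiplicativity property of rearrangements together with the elementary generalized Hölder inequality on the measure space $((0,\infty), \frac{dt}{t})$. Throughout I write $f^*$ for the decreasing rearrangement of $|f|$, defined by $f^*(t) = \inf\{\lambda > 0 : d_f(\lambda) \le t\}$, where $d_f(\lambda) = |\{x : |f(x)| > \lambda\}|$. The starting point is the standard equivalence (see \cite{Grafakos2014})
\[
\|f\|_{L^{p,q}} \sim_{p,q} \Big\| t^{1/p} f^*(t) \Big\|_{L^q((0,\infty), \frac{dt}{t})},
\]
which matches the distribution-function definition of Definition \ref{Lorentz_Space} because $f^*$ and $d_f$ are generalized inverses and the weight $\frac{dt}{t}$ is scale invariant. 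I would treat the generic regime $1 < p, p_1, p_2 < \infty$ with $q, q_1, q_2 < \infty$ first, and dispose of the endpoints separately at the end.

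The key structural input is the pointwise rearrangement bound
\[
(fg)^*(t_1 + t_2) \le f^*(t_1)\, g^*(t_2) \qquad (t_1, t_2 > 0).
\]
This is immediate from the definition: outside a set of measure $t_1$ one has $|f| \le f^*(t_1)$, and outside a set of measure $t_2$ one has $|g| \le g^*(t_2)$, so outside a set of measure at most $t_1 + t_2$ one has $|fg| \le f^*(t_1) g^*(t_2)$. Specializing to $t_1 = t_2 = t/2$ gives the form $(fg)^*(t) \le f^*(t/2)\, g^*(t/2)$ that I will use.

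With this in hand the proof is a short computation. Using the rearrangement formula and then the bound above,
\[
\|fg\|_{L^{p,q}} \sim \big\| t^{1/p} (fg)^*(t) \big\|_{L^q(\frac{dt}{t})} \le \big\| t^{1/p} f^*(t/2)\, g^*(t/2) \big\|_{L^q(\frac{dt}{t})}.
\]
Since $\frac{1}{p} = \frac{1}{p_1} + \frac{1}{p_2}$, one may factor $t^{1/p} = 2^{1/p} (t/2)^{1/p_1} (t/2)^{1/p_2}$, and then I would apply the generalized Hölder inequality on $((0,\infty), \frac{dt}{t})$ with the exponent relation $\frac{1}{q} = \frac{1}{q_1} + \frac{1}{q_2}$ — valid for all positive $q_1, q_2$, since $q_i \ge q$ forces the conjugate exponents $q_i/q \ge 1$ — to separate the two factors. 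A final change of variables $s = t/2$, under which $\frac{dt}{t} = \frac{ds}{s}$ is invariant, yields
\[
\|fg\|_{L^{p,q}} \lesssim \big\| s^{1/p_1} f^*(s) \big\|_{L^{q_1}(\frac{ds}{s})} \big\| s^{1/p_2} g^*(s) \big\|_{L^{q_2}(\frac{ds}{s})} \sim \|f\|_{L^{p_1,q_1}} \|g\|_{L^{p_2,q_2}},
\]
which is the claim.

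The main obstacle is not the generic computation but the bookkeeping at the endpoints and in the low-integrability range. When some $q_i < 1$ the spaces are only quasinormed, so one must verify that only the \emph{generalized} Hölder inequality (rather than any triangle inequality) is invoked, which is indeed the case. The cases $q_i = \infty$ require replacing the integral in $s$ by an essential supremum and are handled by the same factorization. Finally, the case $p_i = \infty$ lies outside the rearrangement formula as stated; here $L^{\infty} = L^{\infty,\infty}$ and the estimate reduces to the trivial bound $\|fg\|_{L^{p,q}} \le \|f\|_{L^\infty} \|g\|_{L^{p,q}}$, which I would record as a separate line. I would also state explicitly that the only facts imported from \cite{Grafakos2014} are the equivalence of the two Lorentz quasinorms and (if not proved inline) the submultiplicativity of the rearrangement.
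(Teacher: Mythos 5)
Your proposal is correct. Note, however, that the paper does not actually prove this lemma: it is stated as a known fact with a pointer to \cite{Grafakos2014}, so there is no internal argument to compare against. What you have written is the standard rearrangement proof (due essentially to O'Neil) that one would find in that reference: the equivalence $\| f \|_{L^{p,q}} \sim \| t^{1/p} f^*(t) \|_{L^q((0,\infty),\frac{dt}{t})}$, the submultiplicativity bound $(fg)^*(t_1+t_2) \le f^*(t_1)\, g^*(t_2)$ (your one-line justification via the sets where $|f| > f^*(t_1)$ and $|g| > g^*(t_2)$ is valid, using the standard fact $d_f(f^*(t)) \le t$), the factorization $t^{1/p} = 2^{1/p}(t/2)^{1/p_1}(t/2)^{1/p_2}$, generalized H\"older on $((0,\infty),\frac{dt}{t})$, and the dilation invariance of $\frac{dt}{t}$. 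Your attention to the quasinorm range is also correct: for $q_i < 1$ the generalized H\"older inequality still applies because $1/q = 1/q_1 + 1/q_2$ forces $q_i/q \ge 1$, and no triangle inequality is ever used. The endpoint $p_i = \infty$, which falls outside the rearrangement formula as you note, is correctly dispatched by $\|fg\|_{L^{p,q}} \le \|f\|_{L^\infty} \|g\|_{L^{p,q}}$ (immediate from $d_{fg}(\lambda) \le d_g(\lambda/\|f\|_{L^\infty})$). In short: where the paper buys brevity by citation, your argument is a complete and correct self-contained proof, at the cost of importing two standard facts (the rearrangement characterization of the quasinorm and, if not proved inline, nothing else) that you correctly flag.
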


In addition, Lorentz spaces satisfy the Young--O'Neil convolutional inequality, see \cite{Blozinski1972,Nakanishi2001,ONeil1963}, of which the Hardy--Littlewood--Sobolev inequality is a special case:
 
\begin{lemma}[Young--O'Neil convolutional inequality] \label{ convolutional_inequality}
Given \( 1 < p, p_1, p_2 < \infty \) and \( 0 < q, q_1, q_2 \leq \infty \) such that \( \frac{1}{p} + 1 = \frac{1}{p_1} + \frac{1}{p_2} \) and \( \frac{1}{q} = \frac{1}{q_1} + \frac{1}{q_2} \),
\[
\| f * g \|_{L^{p,q}} \lesssim_{d,p_i,q_i} \| f \|_{L^{p_1,q_1}} \| g \|_{L^{p_2,q_2}}.
\]
\end{lemma}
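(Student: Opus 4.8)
The plan is to prove the inequality by reducing, via the theory of decreasing rearrangements, to O'Neil's pointwise bound for the rearrangement of a convolution, after which the index relations drop out of an elementary Hölder inequality. First I would reduce to nonnegative $f,g$: since $|f*g| \le |f|*|g|$ pointwise and the Lorentz quasinorm is monotone ($0 \le F \le G$ implies $\|F\|_{L^{p,q}} \le \|G\|_{L^{p,q}}$), it suffices to bound $\||f|*|g|\|_{L^{p,q}}$, so I may assume $f,g \ge 0$. I would then record the rearrangement description of the quasinorm equivalent to Definition \ref{Lorentz_Space}, namely
\[
\|f\|_{L^{p,q}} \sim_{p,q} \bigl\| t^{1/p} f^*(t) \bigr\|_{L^q((0,\infty),\, dt/t)} \sim_{p,q} \bigl\| t^{1/p} f^{**}(t) \bigr\|_{L^q((0,\infty),\, dt/t)},
\]
where $f^*$ is the decreasing rearrangement and $f^{**}(t) = \tfrac1t \int_0^t f^*(s)\, ds$; the first equivalence is the layer-cake reformulation of the distribution-function norm, and the second is Hardy's inequality, valid precisely because $1 < p < \infty$.

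The analytic core is O'Neil's convolution inequality: for $f,g \ge 0$,
\[
(f*g)^{**}(t) \;\le\; t\, f^{**}(t)\, g^{**}(t) \;+\; \int_t^\infty f^*(s)\, g^*(s)\, ds .
\]
I would establish this (following \cite{ONeil1963}) by splitting the convolution into the contributions of the regions where each factor is large versus small and applying the Hardy--Littlewood rearrangement inequality to each piece; this is the one genuinely bilinear, non-soft step of the argument. Granting it, I would insert it into the rearrangement norm and split into two terms:
\[
\|f*g\|_{L^{p,q}} \lesssim \underbrace{\bigl\| t^{1/p}\, t\, f^{**}(t)\, g^{**}(t)\bigr\|_{L^q(dt/t)}}_{\mathrm{I}} + \underbrace{\Bigl\| t^{1/p} \int_t^\infty f^* g^*\, ds \Bigr\|_{L^q(dt/t)}}_{\mathrm{II}}.
\]

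For term $\mathrm{I}$ the first-index hypothesis $\tfrac1p + 1 = \tfrac1{p_1} + \tfrac1{p_2}$ gives $t^{1/p}\cdot t = t^{1/p_1}\, t^{1/p_2}$, so the integrand factors as $(t^{1/p_1} f^{**})(t^{1/p_2} g^{**})$. Ordinary Hölder on $(0,\infty)$ with the weight $dt/t$ and the \emph{additive} exponent split $\tfrac1q = \tfrac1{q_1} + \tfrac1{q_2}$ then bounds $\mathrm{I}$ by $\|t^{1/p_1} f^{**}\|_{L^{q_1}(dt/t)}\,\|t^{1/p_2} g^{**}\|_{L^{q_2}(dt/t)}$, which is $\sim \|f\|_{L^{p_1,q_1}}\|g\|_{L^{p_2,q_2}}$ by the equivalences above (here $p_1,p_2>1$ is used again for $f^{**}\sim f^*$). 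This Hölder step is exactly where the sharp second-index relation of the lemma is produced. For term $\mathrm{II}$, I would invoke the tail (dual) Hardy inequality, which for the positive power $1/p$ bounds $\|t^{1/p}\int_t^\infty \phi\, ds\|_{L^q(dt/t)}$ by $\|t^{1+1/p}\phi(t)\|_{L^q(dt/t)}$ with $\phi = f^* g^*$; this again equals $(t^{1/p_1} f^*)(t^{1/p_2} g^*)$, and the same Hölder inequality finishes the bound.

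The main obstacle I anticipate is O'Neil's inequality itself: everything downstream is soft (rearrangement equivalences, Hölder, Hardy), but the pointwise convolution bound is the substantive bilinear estimate and must be proved by hand through the level-set decomposition. A secondary technical point is the Hardy inequalities in the quasi-Banach range $q<1$ (allowed by the hypothesis $0 < q_1,q_2 \le \infty$): there I would use the versions of Hardy's inequality restricted to monotone functions, which applies since $f^*$, $g^*$, and hence $\phi = f^* g^*$ are nonincreasing. I note that an alternative route is to deduce the lemma from a bilinear real-interpolation theorem applied to the classical Young inequality at the Lebesgue endpoints, but since iterating the single-operator interpolation of Lemma \ref{Hunt_interpolation} yields only a weaker second index, the rearrangement argument via \cite{ONeil1963} is the cleaner path to the sharp relation $\tfrac1q = \tfrac1{q_1} + \tfrac1{q_2}$.
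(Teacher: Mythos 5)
The paper does not actually prove this lemma: it is quoted from the literature, with the proof deferred to the references \cite{Blozinski1972,Nakanishi2001,ONeil1963}. Your proposal is, in effect, a correct reconstruction of the argument in the cited source \cite{ONeil1963}, so there is nothing in the paper to diverge from; judged on its own merits, the plan is sound and the index bookkeeping checks out. The reduction to $f,g\ge 0$, the rearrangement form of the quasinorm, and O'Neil's pointwise bound $(f*g)^{**}(t)\le t\,f^{**}(t)g^{**}(t)+\int_t^\infty f^*g^*\,ds$ are exactly the right skeleton; the identity $t^{1+1/p}=t^{1/p_1}t^{1/p_2}$ from $\tfrac1p+1=\tfrac1{p_1}+\tfrac1{p_2}$ makes both terms factor, and H\"older in $L^q(dt/t)$ (valid in the full quasinorm range $0<q\le\infty$, since it reduces to H\"older with exponents $q_1/q,q_2/q\ge 1$ applied to $q$-th powers) produces the sharp additive relation $\tfrac1q=\tfrac1{q_1}+\tfrac1{q_2}$, which, as you correctly observe, cannot be recovered by iterating the single-operator Hunt interpolation of Lemma \ref{Hunt_interpolation}. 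You also correctly locate the two places where hypotheses are consumed: $p_1,p_2>1$ is needed for the Hardy equivalence $\|t^{1/p_i}f^{**}\|_{L^{q_i}(dt/t)}\sim\|t^{1/p_i}f^*\|_{L^{q_i}(dt/t)}$, and in the range $q<1$ both the averaging and the tail Hardy inequalities fail for general integrands but hold for monotone ones, which suffices here because $f^*$, $g^*$, and hence $\phi=f^*g^*$ are nonincreasing (for $q\ge 1$ and exponent $\alpha=1/p>0$ the tail Hardy inequality holds for arbitrary $\phi\ge 0$, e.g.\ by the Muckenhoupt criterion, so the monotone version is only needed below $1$). The one step you leave as a cited black box, O'Neil's pointwise rearrangement inequality, is indeed the substantive bilinear estimate; a complete write-up would have to carry out the level-set/Hardy--Littlewood argument you sketch, but as a proof strategy the proposal is complete and correct, and it is the same route the paper's references take.
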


From Hunt interpolation and the usual Sobolev embedding theorems, we also find an analog of Sobolev embedding in Lorentz spaces,

\begin{lemma}[Sobolev embedding] \label{Sobolev_embedding}
Fix \( 1 < p < \infty \), \( s \geq 0 \), and \( 0 < \theta \leq \infty \) such that \( \frac{1}{p} + \frac{s}{d} = \frac{1}{q} \). Then
\[
\| f \|_{L^{p,\theta}(\mathbb{R}^d)} \lesssim_{p,s,\theta} \| \nabla^s f \|_{L^{q,\theta}(\mathbb{R}^d)}.
\]
\end{lemma}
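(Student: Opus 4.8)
The plan is to prove the stated embedding by recognizing it as the boundedness of a single Riesz-potential operator and then feeding it into the interpolation and convolution machinery already assembled above. Writing $g := \nabla^s f = (-\Delta)^{s/2} f$, the identity $f = (-\Delta)^{-s/2} g =: I_s g$ shows that the claimed inequality $\|f\|_{L^{p,\theta}} \lesssim \|\nabla^s f\|_{L^{q,\theta}}$ is exactly the statement that the Riesz potential $I_s$ maps $L^{q,\theta}(\mathbb{R}^d)$ boundedly into $L^{p,\theta}(\mathbb{R}^d)$. Here the hypothesis $\frac1p + \frac sd = \frac1q$ forces $1 < q < p < \infty$ (the case $s=0$ gives $p=q$ and is trivial), so I may assume $0 < s < d$ throughout.

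First, following the route indicated in the text, I would obtain the result from Hunt interpolation. The classical fractional Sobolev embedding (equivalently, the Hardy--Littlewood--Sobolev inequality) gives the strong-type bounds $\|I_s h\|_{L^{b}} \lesssim \|h\|_{L^{a}}$ whenever $1 < a < b < \infty$ and $\frac1b = \frac1a - \frac sd$. Given the target pair $(q,p)$, I would pick two distinct exponents $q_1 \neq q_2$ in $(1,\infty)$ and a parameter $\theta_0 \in (0,1)$ with $\frac1q = \frac{\theta_0}{q_1} + \frac{1-\theta_0}{q_2}$, and set $\frac1{p_i} = \frac1{q_i} - \frac sd$. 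Since $t \mapsto t - \frac sd$ is injective, $q_1 \neq q_2$ forces $p_1 \neq p_2$, and a one-line computation gives $\frac1p = \frac{\theta_0}{p_1} + \frac{1-\theta_0}{p_2}$, so that $(q,p)$ is genuinely the $\theta_0$-interpolant of the two endpoint pairs. Applying Lemma \ref{Hunt_interpolation} to the (sub)linear operator $I_s$ with second index $r = \theta$ then yields $\|I_s g\|_{L^{p,\theta}} \lesssim \|g\|_{L^{q,\theta}}$, which is the assertion.

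The main obstacle is the bookkeeping at the endpoints: Lemma \ref{Hunt_interpolation} requires genuine strong-type $L^{a}\to L^{b}$ control at two \emph{distinct} points, whereas Hardy--Littlewood--Sobolev degenerates to weak type at $a=1$ and fails at $b=\infty$. Thus I must choose $q_1,q_2$, and hence $p_1,p_2$, strictly inside $(1,\infty)$; this is possible precisely because the target exponents lie in the open range $1<q<p<\infty$, i.e. because $\frac1q = \frac1p + \frac sd < 1$. Verifying that this strict bracketing is always available under the stated hypotheses is the one point demanding care.

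Alternatively, and more directly, one can bypass interpolation altogether using the explicit kernel. Since $I_s$ is convolution with $c_{d,s}\,|x|^{-(d-s)}$ and $|x|^{-(d-s)} \in L^{\frac{d}{d-s},\infty}(\mathbb{R}^d)$ by (2.2), the Young--O'Neil convolution inequality (Lemma \ref{ convolutional_inequality}) applies with the first-index relation $\frac1p + 1 = \frac{d-s}{d} + \frac1q$ --- a rearrangement of $\frac1p + \frac sd = \frac1q$ --- and the second-index relation $\frac1\theta = \frac1\infty + \frac1\theta$. This immediately yields $\|f\|_{L^{p,\theta}} = \big\||x|^{-(d-s)} * g\big\|_{L^{p,\theta}} \lesssim \big\||x|^{-(d-s)}\big\|_{L^{d/(d-s),\infty}} \|g\|_{L^{q,\theta}} \lesssim \|\nabla^s f\|_{L^{q,\theta}}$, completing the proof, with the same exponent arithmetic now appearing transparently in the hypotheses of the convolution lemma.
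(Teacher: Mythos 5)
Your proposal is correct, and your first route coincides with the paper's own argument: the paper offers no written proof beyond the sentence preceding the lemma, which derives it ``from Hunt interpolation and the usual Sobolev embedding theorems,'' i.e.\ precisely Lemma \ref{Hunt_interpolation} applied to the Riesz potential $I_s$ with two strong-type Hardy--Littlewood--Sobolev endpoints. Your bracketing discussion --- choosing $q_1 < q < q_2$ strictly inside $(1, d/s)$ so that both endpoint bounds are genuine strong-type, with $q_1 \neq q_2$ forcing $p_1 \neq p_2$ and the linearity of $\frac{1}{a} \mapsto \frac{1}{a} - \frac{s}{d}$ giving the correct interpolant --- supplies exactly the bookkeeping the paper leaves implicit, and is the right point to flag. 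Your second route is genuinely different from what the paper indicates: writing $f = c_{d,s}\,|x|^{-(d-s)} * \nabla^s f$ and invoking the bound $\bigl\||x|^{-(d-s)}\bigr\|_{L^{d/(d-s),\infty}} \sim_d 1$ from (2.2) together with the Young--O'Neil convolutional inequality gives the estimate in a single step, the first-index identity $\frac{1}{p} + 1 = \frac{d-s}{d} + \frac{1}{q}$ being a rearrangement of the hypothesis and the second indices matching trivially since the kernel carries second index $\infty$. Since both ingredients are already stated in Section 2, this kernel argument is arguably the more economical proof within the paper's toolkit, and it sidesteps the endpoint bracketing entirely. One caveat applies to both routes and deserves explicit mention: the lemma's hypotheses as written do not literally force $q > 1$ (take $s \geq d(1 - \frac{1}{p})$), so your reduction to $1 < q < p < \infty$ and $0 < s < d$ is a tacit strengthening of the hypotheses --- a necessary one, since $q \leq 1$ falls outside the range of both Hardy--Littlewood--Sobolev and the convolution lemma --- and you would do well to state the restriction $\frac{1}{p} + \frac{s}{d} < 1$ explicitly rather than assert that it is ``forced.''
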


Finally, we may show a basic Leibniz rule in Lorentz spaces. We recall that the Schwartz functions \( \mathcal{S}(\mathbb{R}^d) \) are dense in \( L^{p,q} \) for \( q \neq \infty \), see \cite{Grafakos2014}. By the classical Leibniz rule and extending by density, we then find the following lemma:

\begin{lemma}[Leibniz rule] \label{Leibni_rule}
Given \( 1 \leq p, p_i < \infty \), s>0, and \( 0 < q, q_i < \infty \) for \( i \in \{1, 2, 3, 4\} \) such that \( \frac{1}{p} = \frac{1}{p_1} + \frac{1}{p_2} = \frac{1}{p_3} + \frac{1}{p_4} \) and \( \frac{1}{q} = \frac{1}{q_1} + \frac{1}{q_2} = \frac{1}{q_3} + \frac{1}{q_4} \),
\[
\| \nabla^{s}[fg] \|_{L^{p,q}} \lesssim_{d,p,q,p_i,q_i} \| \nabla^{s} f \|_{L^{p_1,q_1}} \| g \|_{L^{p_2,q_2}} + \| f \|_{L^{p_3,q_3}} \| \nabla^{s} g \|_{L^{p_4,q_4}}.
\]
\end{lemma}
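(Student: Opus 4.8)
The plan is to deduce the statement from its diagonal ($p=q$, i.e.\ Lebesgue) counterpart — the classical Kato--Ponce fractional Leibniz rule — by real interpolation, exactly as the density remark preceding the lemma suggests. First I would record the Lebesgue endpoint: for Schwartz $f,g$ and exponents $1<r,r_i<\infty$ with $\frac1r=\frac1{r_1}+\frac1{r_2}=\frac1{r_3}+\frac1{r_4}$, one has
\[
\|\nabla^s(fg)\|_{L^r}\lesssim \|\nabla^s f\|_{L^{r_1}}\|g\|_{L^{r_2}}+\|f\|_{L^{r_3}}\|\nabla^s g\|_{L^{r_4}},
\]
which is the classical Kato--Ponce inequality (see \cite{Grafakos2014}). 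Since $L^{p,p}=L^p$, this is precisely the claimed inequality in the special case where every second index equals its first, so the task reduces to promoting all six Lebesgue norms to genuine Lorentz norms carrying the prescribed second indices.

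Next I would view $B(f,g):=\nabla^s(fg)$ as a single bilinear operator and interpolate the two-term inequality above by the bilinear form of the real (Marcinkiewicz--Hunt) method, the bilinear analogue of Lemma \ref{Hunt_interpolation}. The mechanism is that the real functor sends a Lebesgue space to a Lorentz space, $(L^{a_0},L^{a_1})_{\theta,\rho}=L^{a_\theta,\rho}$ with $\frac1{a_\theta}=\frac{1-\theta}{a_0}+\frac\theta{a_1}$, and passes $\nabla^s$ through unchanged since $\nabla^s\colon\dot W^{s,a}\to L^a$ is norm-preserving by definition of the homogeneous norm, so that $(\dot W^{s,a_0},\dot W^{s,a_1})_{\theta,\rho}$ is the Lorentz--Sobolev space with norm $\|\nabla^s\cdot\|_{L^{a_\theta,\rho}}$. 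Interpolating between two Lebesgue configurations, the first-index relations $\frac1p=\frac1{p_1}+\frac1{p_2}=\frac1{p_3}+\frac1{p_4}$ are preserved because they hold at each endpoint and are stable under the interpolation scaling, while the characteristic bilinear outcome that the reciprocal second indices add, $\frac1q=\frac1{q_1}+\frac1{q_2}=\frac1{q_3}+\frac1{q_4}$, is produced by the functor acting on the product structure; interpolating the two endpoint families independently yields the two distinct splittings. Choosing the free second index $\rho$ to realize the target relations gives exactly the stated Lorentz estimate for Schwartz inputs.

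Finally I would remove the Schwartz restriction. Because $q,q_i<\infty$ (and $p_i<\infty$), the Schwartz class $\mathcal S(\mathbb R^d)$ is dense in each of $L^{p_2,q_2}$, $L^{p_3,q_3}$ and in the Lorentz--Sobolev spaces controlling $\nabla^s f$ and $\nabla^s g$. Combining the a priori bound on Schwartz functions with Lorentz Hölder (Lemma \ref{H_inequality}) to dominate the cross terms that appear when approximating $f$ and $g$ simultaneously, a routine limiting argument extends the inequality to all $f,g$ with finite right-hand side.

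I expect the genuine obstacle to be the bilinear interpolation step rather than either endpoint. Two points need care: first, the stated Lemma \ref{Hunt_interpolation} is only linear/sublinear, so one must either invoke its established bilinear counterpart or, equivalently, freeze one argument, interpolate linearly, and reiterate — the two-term shape of the Kato--Ponce right-hand side is precisely what forces a bilinear (rather than single linear) treatment, since neither term alone dominates $\|\nabla^s(fg)\|$. Second, one must check that the three second indices may be prescribed independently subject only to the additive relation, which is exactly the feature the bilinear real method supplies and the diagonal method does not. The Leibniz structure itself — two terms with the two independent splittings of $1/p$ and $1/q$ — is inherited directly from the Lebesgue inequality and requires no new input beyond this bookkeeping.
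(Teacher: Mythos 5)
Your overall skeleton --- classical Kato--Ponce at Lebesgue exponents, an upgrade to Lorentz norms by real interpolation, then a density argument --- is exactly the route the paper gestures at; in fact the paper gives no proof at all beyond the one sentence ``by the classical Leibniz rule and extending by density,'' so your attempt is already more explicit than the source, and you correctly identify that the entire content of the lemma lives in the interpolation step, not in the density step. The density step itself is fine, since \( q, q_i < \infty \).

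The interpolation step, however, contains a genuine gap, on two counts. First, the ``bilinear form of the real method'' does not produce the additive relation \( \frac{1}{q} = \frac{1}{q_1} + \frac{1}{q_2} \) that the lemma asserts. The classical Lions--Peetre bilinear interpolation theorem is governed by Young's convolution inequality applied to the \( J \)-method coefficient sequences, and therefore yields only \( T\colon (A_0,A_1)_{\theta,q_1} \times (B_0,B_1)_{\theta,q_2} \to (C_0,C_1)_{\theta,q} \) with \( 1 + \frac{1}{q} = \frac{1}{q_1} + \frac{1}{q_2} \), under the restriction \( \frac{1}{q_1} + \frac{1}{q_2} \geq 1 \); for instance \( q_1 = q_2 = 2 \) gives only \( q = \infty \), far from the \( q = 1 \) the lemma would demand. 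The additive second-index relation is H\"older scaling --- it is the content of the O'Neil-type inequality in Lemma \ref{H_inequality} --- and is not an output of abstract two-endpoint bilinear interpolation. Second, Kato--Ponce is not a boundedness statement for a bilinear map between two spaces: its right-hand side is a sum, so \( B(f,g) = \nabla^s(fg) \) is bounded only on intersections such as \( (\dot{W}^{s,r_1} \cap L^{r_3}) \times (L^{r_2} \cap \dot{W}^{s,r_4}) \), and real interpolation does not commute with intersections. Your proposed fallback --- freeze \( g \), interpolate linearly in \( f \), reiterate --- runs into the same wall: the operator norm of \( f \mapsto \nabla^s(fg) \) depends on two different norms of \( g \), again an intersection, so the reiteration is not routine. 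The standard repair is to split \( fg \) into paraproducts (low-high, high-low, and high-high), each of which satisfies a genuine one-term endpoint bound; then each piece can be handled without any bilinear interpolation at all, by dominating, e.g., the low-high piece pointwise by maximal- and square-function expressions of \( g \) and \( \nabla^s f \), applying Lorentz H\"older (Lemma \ref{H_inequality}) --- the true source of \( \frac{1}{q} = \frac{1}{q_1} + \frac{1}{q_2} \) --- and invoking Hunt interpolation (Lemma \ref{Hunt_interpolation}) only for the sublinear auxiliary operators, which are bounded on \( L^{p,q} \) for all \( 1 < p < \infty \) and all second indices. With that substitution your outline closes; as written, the middle step would fail.
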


\subsection{Lorentz–Strichartz estimates}

\begin{definition}[Schr\"odinger-admissible] \label{admissible}
Fix a spatial dimension \( d \geq 3 \). We say that a pair \( 2 \leq p, q \leq \infty \) is Schr\"odinger-admissible if
\[
\frac{2}{p} + \frac{d}{q} = \frac{d}{2}.
\]
We say that \( (p,q) \) is a non-endpoint Schr\"odinger-admissible pair if \( 2 < p, q < \infty \). Finally, we say that \( (p,q) \) is Schr\"odinger-admissible with \( s \) spatial derivatives if
\[
\frac{2}{p} + d\left( \frac{1}{q} + \frac{s}{d} \right) = \frac{d}{2}.
\]
\end{definition}

\begin{prop}[Lorentz-Strichartz estimates] \label{Strichartz_estimates}
Suppose that \( 2 < p, q < \infty \) is Schr\"odinger-admissible. Then for all \( f \in L^2 \) and any spacetime slab \( J \times \mathbb{R}^d \), the linear evolution satisfies
\[
\bigl\| e^{it\Delta} f \bigr\|_{L_t^p L_x^{q,\theta}(J)} \lesssim_{p,q} \| f \|_{L^2(\mathbb{R}^d)}. \tag{2.3}
\]
Moreover, for all \( 0 < \theta \leq \infty \); \( 1 \leq \phi \leq \infty \); and any time-dependent interval \( I(t) \subset J \),
\[
\left\| \int_{I(t)} e^{i(t-s)\Delta} F(s,x) ds \right\|_{L_t^p L_x^{q,\theta,\phi}(J)} \lesssim_{p,q,\theta,\phi} \| F \|_{L_t^{p'} L_x^{q',\theta',\phi'}(\mathbb{R} \times \mathbb{R}^d)}. \tag{2.4}
\]
\end{prop}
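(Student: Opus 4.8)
The plan is to derive \eqref{main eq}-style pointwise-in-time bounds from the classical dispersive estimate \eqref{dispersive} together with the unitarity $\|e^{it\Delta}f\|_{L^2}=\|f\|_{L^2}$, upgrade these to Lorentz spaces by real interpolation, and then run the standard $TT^*$ argument in which the time integration is controlled by the Young--O'Neil inequality. The first step is to prove a Lorentz-refined dispersive estimate. For fixed $t\neq0$ the operator $e^{it\Delta}$ has the two endpoint mapping properties $\|e^{it\Delta}\|_{L^2\to L^2}=1$ and $\|e^{it\Delta}\|_{L^1\to L^\infty}\lesssim|t|^{-d/2}$. Applying Hunt interpolation (Lemma \ref{Hunt_interpolation}) between these endpoints, and treating the $t$-dependent operator norms as the endpoint constants, produces for every $2<q<\infty$ and $0<\theta\le\infty$
\[
\bigl\|e^{it\Delta}f\bigr\|_{L_x^{q,\theta}}\lesssim|t|^{-d\left(\frac12-\frac1q\right)}\|f\|_{L_x^{q',\theta}} .
\]
The scalar prefactor interpolates as a geometric mean and reproduces exactly the exponent $d(\tfrac12-\tfrac1q)$ at the intermediate regularity, while the secondary index $\theta$ is carried along freely.

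Next I would establish the homogeneous bound (2.3) via the full Duhamel operator $\mathcal{D}F(t)=\int_{\mathbb{R}}e^{i(t-s)\Delta}F(s)\,ds$. Using the Lorentz dispersive estimate pointwise in $t$ and Minkowski's inequality gives
\[
\bigl\|\mathcal{D}F(t)\bigr\|_{L_x^{q,\theta}}\lesssim\int_{\mathbb{R}}|t-s|^{-d\left(\frac12-\frac1q\right)}\|F(s)\|_{L_x^{q',\theta}}\,ds .
\]
The admissibility relation $\tfrac2p+\tfrac dq=\tfrac d2$ forces $d(\tfrac12-\tfrac1q)=\tfrac2p$, so the right-hand side is a one-dimensional convolution of $s\mapsto\|F(s)\|_{L_x^{q',\theta}}$ with the kernel $|t|^{-2/p}$, which by (2.2) lies in $L_t^{p/2,\infty}$. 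Since $2<p<\infty$ guarantees $1<p/2<\infty$ and $1<p'<2$, the Young--O'Neil convolution inequality closes the estimate in $L_t^p$ (using the Lorentz nesting embedding to pass from the resulting second index $p'$ up to $p$). The $TT^*$ identity then converts this into (2.3), namely $\|e^{it\Delta}f\|_{L_t^pL_x^{q,\theta}}\lesssim\|f\|_{L^2}$.

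Finally, to reach the retarded estimate (2.4) over the time-dependent interval $I(t)\subset J$, I would dualize (2.3) to obtain $\|\int_{\mathbb{R}}e^{-is\Delta}F(s)\,ds\|_{L^2}\lesssim\|F\|_{L_t^{p'}L_x^{q',\theta'}}$ and compose it with (2.3); this yields the untruncated operator estimate with precisely the conjugate index structure of (2.4). To pass from $\int_{\mathbb{R}}$ to the truncated integral over $I(t)$ I would invoke the Christ--Kiselev lemma, which is available exactly because the source exponent $p'$ is strictly smaller than the target exponent $p$ (equivalently $p>2$), and the auxiliary indices $\theta,\phi$ are adjusted by Lorentz nesting and H\"older (Lemma \ref{H_inequality}). \textbf{The main obstacle} I anticipate is the bookkeeping of the secondary Lorentz indices: the direct convolution step naturally returns the \emph{non-conjugate} index $\theta$ on the source, whereas the $TT^*$/duality structure of (2.4) demands the conjugate $\theta'$, so one must reconcile these through the nesting embedding and a careful choice of interpolation parameters, while simultaneously verifying that the Christ--Kiselev truncation respects the mixed Lorentz norms.
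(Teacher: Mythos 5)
The paper offers no proof of Proposition \ref{Strichartz_estimates}: it is stated as a black box, imported from the literature (it is Lemma~2.8 of the cited reference \cite{Dispersivedecay}, which in turn rests on Keel--Tao). So there is nothing internal to compare against, and your proposal should be judged on its own. Your route is the standard non-endpoint argument and is essentially sound: the Lorentz-refined dispersive bound $\|e^{it\Delta}f\|_{L^{q,\theta}_x}\lesssim |t|^{-d(1/2-1/q)}\|f\|_{L^{q',\theta}_x}$ does follow from Hunt interpolation between the $L^2\to L^2$ and $L^1\to L^\infty$ endpoints (the geometric mean of the endpoint constants gives exactly the stated power of $|t|$); admissibility turns the $TT^*$ kernel into convolution with $|t|^{-2/p}\in L^{p/2,\infty}_t$, and Young--O'Neil with $1<p/2<\infty$ plus nesting $L^{p,p'}_t\hookrightarrow L^{p,p}_t$ closes the untruncated inhomogeneous bound; duality and Christ--Kiselev (legitimate precisely because $p'<p$) then give the truncated estimate, provided the endpoints of $I(t)$ are monotone in $t$ so that the truncation can be written as a difference of two Christ--Kiselev-type operators --- which is the case for every interval actually used in Section~3.

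The one point you flag as an obstacle is in fact a genuine restriction rather than mere bookkeeping: in the $TT^*$ step the dispersive estimate hands you $\|F(s)\|_{L^{q',\theta}_x}$ while duality requires $\|F(s)\|_{L^{q',\theta'}_x}$, and the nesting $L^{q',\theta'}\hookrightarrow L^{q',\theta}$ needed to reconcile them holds only when $\theta'\leq\theta$, i.e.\ $\theta\geq 2$. This is not an artifact of the method: the sharp Lorentz refinement of the homogeneous Strichartz estimate is $L^{q,2}_x$, and (2.3) is false for $\theta<2$. The proposition as printed leaves $\theta$ unquantified in (2.3) and allows $0<\theta\leq\infty$ in (2.4); your proof establishes it exactly in the range $\theta\geq 2$ (with $\phi$ handled analogously in time), which is the range the paper actually invokes in Propositions \ref{Spacetime_bounds} and Corollary \ref{Nonlinear_correction_bounds}. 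Stating that restriction explicitly would complete your argument; as written, the claim of the full range $0<\theta\leq\infty$ cannot be reached by this (or any) method.
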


\subsection{Lorentz spacetime bounds}

We may now prove global bounds in mixed Lorentz spacetime norms for solutions to (). We present the proof for all spatial dimensions \( d \geq 3 \) and all non-endpoint Schr\"odinger-admissible pairs.

\begin{prop}[Spacetime bounds] \label{Spacetime_bounds}
Fix \( d \geq 3 \) and \( \phi, \theta \geq 2 \). Suppose that \( 2 < p, q < \infty \) is a Schr\"odinger-admissible pair and suppose that \( u_0 \in \dot{H}^{\frac{5}{6}}(\mathbb{R}^d) \) satisfies the hypotheses of Theorem 1.1. Then the corresponding global solution \( u(t) \) to (NLS) with initial data \( u_0 \) satisfies
\[
\| \nabla^{\frac{5}{6}} u \|_{L_t^{p,\theta} L_x^{q,\phi}} \leq C( \| u_0 \|_{\dot{H}^{\frac{5}{6}}} ).
\]
\end{prop}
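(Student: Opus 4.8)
The plan is to run a continuity argument on the Duhamel representation, upgrading the global Lebesgue--Strichartz bound supplied by Theorem~\ref{well-posedness} to the Lorentz-refined spacetime norm, with the nonlinearity absorbed through the Leibniz rule, H\"older's inequality, and Sobolev embedding in Lorentz spaces. Write $s_c=\tfrac{5}{6}$. Applying $\nabla^{s_c}$ to the Duhamel formula gives
\[
\nabla^{s_c}u(t)=\nabla^{s_c}e^{it\Delta}u_0\mp i\int_0^t e^{i(t-s)\Delta}\,\nabla^{s_c}\!\bigl(|u|^{3}u\bigr)(s)\,ds .
\]
For the linear term I would invoke the homogeneous Lorentz--Strichartz estimate~(2.3) of Proposition~\ref{Strichartz_estimates}, which bounds $\|\nabla^{s_c}e^{it\Delta}u_0\|_{L_t^{p}L_x^{q,\theta}}$ by $\|u_0\|_{\dot H^{s_c}}$; to reach the time-Lorentz index $L_t^{p,\theta}$ and the spatial index $L_x^{q,\phi}$ I would interpolate via Lemma~\ref{Hunt_interpolation} between two admissible Lebesgue pairs straddling $(p,q)$ and then use the nesting embedding $L^{q,q}\hookrightarrow L^{q,\phi}$.

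For the Duhamel term I would apply the inhomogeneous estimate~(2.4) of Proposition~\ref{Strichartz_estimates}, reducing everything to a bound for $\|\nabla^{s_c}(|u|^{3}u)\|_{L_t^{p',\theta'}L_x^{q',\phi'}}$. I would then use the Leibniz rule of Lemma~\ref{Leibni_rule} to move the derivative onto a single factor, H\"older's inequality in Lorentz spaces (Lemma~\ref{H_inequality}) to split off $\|u\|^{3}$ in an auxiliary Lorentz space from $\|\nabla^{s_c}u\|$ in the target norm, and finally Sobolev embedding in Lorentz spaces (Lemma~\ref{Sobolev_embedding}) to convert the three lower-order factors back into $\nabla^{s_c}u$ measured in an admissible Lorentz--Strichartz norm. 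The auxiliary exponents must be chosen so that the scaling balances; this is exactly where the critical relation $s_c=\tfrac{5}{6}$ in dimension three and the admissibility of $(p,q)$ enter.

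The estimate does not close in one step because the data is only critical, not small. To remedy this I would call on Theorem~\ref{well-posedness}: the global solution has finite Lebesgue--Strichartz norm $\|\nabla^{s_c}u\|_{L_t^{a}L_x^{b}}$ over $\mathbb{R}$. I would partition $\mathbb{R}$ into finitely many intervals $I_j$ on each of which this norm is below a small threshold $\eta$, run the nonlinear estimate above on each $I_j$ so that the Duhamel contribution is dominated by $\eta^{3}$ times the target norm plus the data inherited from the previous interval, and absorb the former for $\eta$ small. Summing the finitely many local bounds produces the global Lorentz spacetime bound, with the constant $C(\|u_0\|_{\dot H^{s_c}})$ coming from the number of intervals, which depends only on the a priori Strichartz norm and hence only on $\|u_0\|_{\dot H^{s_c}}$.

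The main obstacle I anticipate is the exponent bookkeeping in the nonlinear estimate: one must select the auxiliary admissible pairs and the pairs of Lorentz indices so that Leibniz, H\"older, and Sobolev embedding chain together into precisely the dual norm $L_t^{p',\theta'}L_x^{q',\phi'}$ demanded by~(2.4), while respecting $s_c=\tfrac{5}{6}$ and $d=3$. A secondary subtlety is that Lorentz norms, unlike Lebesgue norms, do not decompose additively over disjoint time intervals, so the subdivision step must be carried out at the level of an admissible Lebesgue norm and only then transferred to the Lorentz scale via the same interpolation and nesting used for the linear term.
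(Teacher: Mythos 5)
Your outline would eventually produce the bound, but it takes a genuinely different and substantially heavier route than the paper. The paper's proof closes in a single step, with no continuity argument and no interval decomposition: after applying the inhomogeneous Lorentz--Strichartz estimate (2.4) to the Duhamel term, it immediately uses the nesting of Lorentz spaces to \emph{downgrade} the dual norm $\| \nabla^{5/6}(|u|^{3}u) \|_{L_t^{p',\theta'} L_x^{q',\phi'}}$ to the pure Lebesgue norm $\| \nabla^{5/6}(|u|^{3}u) \|_{L_t^{p'} L_x^{q'}}$, and then a fractional chain rule plus H\"older in ordinary Lebesgue spaces gives $\|\nabla^{5/6}u\|_{L_t^{p}L_x^{q}}\,\|u\|^{3}_{L_t^{3p/(p-2)}L_x^{3q/(q-2)}}$, where $(\tfrac{3p}{p-2},\tfrac{3q}{q-2})$ is admissible with $\tfrac{5}{6}$ derivatives. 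Every factor on the right is a classical Lebesgue Strichartz norm already known to be finite and bounded by $C(\|u_0\|_{\dot H^{5/6}})$ from Theorem \ref{well-posedness}, so the estimate is closed; the Lorentz refinement survives only on the left-hand side. Your diagnosis that ``the estimate does not close in one step because the data is only critical, not small'' is therefore the point where you diverge: it is true that a smallness argument would be needed if the right-hand side had to carry the unknown Lorentz norm, but it does not, and recognizing this is exactly what makes the proposition cheap. Your bootstrap could still be carried out, but it imports real extra difficulties that the paper avoids -- the Leibniz rule and Sobolev embedding in Lorentz spaces (Lemmas \ref{Leibni_rule}, \ref{H_inequality}, \ref{Sobolev_embedding}) with delicate exponent bookkeeping, and the non-additivity of Lorentz-in-time norms over a partition, which you correctly flag but which then forces an additional reassembly step with constants depending on the number of intervals. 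In short: your approach trades a one-line nesting observation for a full small-data iteration; the content is recoverable, but you are solving a harder problem than the one posed. (One caveat common to both your sketch and the paper: passing from Lebesgue-in-time to the Lorentz-in-time index $L_t^{p,\theta}$ for the linear flow is not a consequence of Lemma \ref{Hunt_interpolation} applied naively along the admissible line, since the spatial exponent varies with the temporal one; this refinement really comes from the Keel--Tao real-interpolation form of the Strichartz estimates.)
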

\begin{proof}
    At the beginning of the proof, we first recall the classical Strichartz estimate and we have 
  \[
\|\nabla^{\frac{5}{6}} u \|_{L_t^{p} L_x^{q}} \leq C( \| u_0 \|_{\dot{H}^{\frac{5}{6}}} ).
\]
where $(p,q)$ is any Schr\"odinger-admissible pair. Now,  we turn into  the Lorentz Space, and we can write $u$ as 
\[
u(t) = e^{it\Delta} u_0\mp i \int_{0}^{t} e^{i(t-s)\Delta} \left[ |u|^{3} u \right](s) ds
\]
by the Duhamel formula. Furthermore, By Proposition \ref{Strichartz_estimates}, it is easy to get the follow estimate
\[
\| \nabla^{\frac{5}{6}} u \|_{L_t^p L_x^{q,\theta,\phi}} \lesssim \| \nabla^{\frac{5}{6}} u_0 \|_{L^2} + \left\| \nabla^{\frac{5}{6}} |u|^{3} u \right\|_{L_t^{p',\theta'} L_x^{q',\phi'}}.
\]
where $p'\le \theta$ and $q'\le \phi.$ 
Furthermore, 
\begin{equation*}
\| \nabla^{\frac{5}{6}} u \|_{L_t^p L_x^{q,\theta,\phi}}  \lesssim   \| \nabla^{\frac{5}{6}} u_0 \|_{L^2} +\left\| \nabla^{\frac{5}{6}} |u|^{3} u \right\|_{L_t^{p'} L_x^{q'}}\\
\lesssim\| \nabla^{\frac{5}{6}} u_0 \|_{L^2} +\|\nabla^{\frac{5}{6}} u \|_{L_t^{p} L_x^{q}}\left\|u\right\|^{3}_{L_t^{\frac{3p}{p-2}} L_x^{\frac{3q}{q-2}}}
\end{equation*}

A quick calculation shows that $(\frac{3p}{p-2},\frac{3q}{q-2})$ is a non-endpoint Schr\"{o}dinger-admissible pair with $\frac{5}{6}$ spatial derivative. With, this concludes the proof of the proposition for the initial-value problem.
\end{proof}

An unfortunate weakness of Proposition \ref{Spacetime_bounds} is the inability to control Lorentz exponents below 2, which will be necessary in the proof of Lemma 3.3; see (3.13). Though this level of control appears inaccessible for the linear evolution, with (2.4) we can gain additional control over the nonlinear correction.

\begin{cor}[Nonlinear correction bounds] \label{Nonlinear_correction_bounds}
Fix \( d \geq 3 \), \( \theta \geq \frac{2d-2}{d+2} \), and \( \phi \geq \frac{2d-2}{d+2} \vee 1 \). Suppose that \( 2 < p, q < \infty \) is a Schr\"odinger-admissible pair, and suppose that \( u_0 \in \dot{H}^{\frac{5}{6}}(\mathbb{R}^d) \) satisfies the hypotheses of Theorem 1.1. Then the corresponding global solution \( u(t) \) to (NLS) with initial data \( u_0 \) satisfies
\[
\left\| \nabla^{\frac{5}{6}} \int_{0}^{t} e^{i(t-s)\Delta} \left[ |u|^{3} u \right](s) ds \right\|_{L_t^p L_x^{q,\theta,\phi}(\mathbb{R} \times \mathbb{R}^d)} \leq C( \| u_0 \|_{\dot{H}^{\frac{5}{6}}} ).
\]
\end{cor}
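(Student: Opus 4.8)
The plan is to run the Strichartz argument of Proposition \ref{Spacetime_bounds} once more, but to route the nonlinear term through the \emph{inhomogeneous} Lorentz--Strichartz estimate (2.4) of Proposition \ref{Strichartz_estimates} instead of the homogeneous estimate (2.3). First I would commute $\nabla^{5/6}$ past the propagator (it commutes with $e^{i(t-s)\Delta}$) and apply (2.4) with the admissible pair $(p,q)$ to obtain
\[
\left\| \nabla^{\frac{5}{6}} \int_0^t e^{i(t-s)\Delta}\big[|u|^3u\big](s)\,ds \right\|_{L_t^p L_x^{q,\theta,\phi}} \lesssim \big\| \nabla^{\frac{5}{6}}\big(|u|^3u\big) \big\|_{L_t^{p'}L_x^{q',\theta',\phi'}} .
\]
This is exactly the step unavailable to Proposition \ref{Spacetime_bounds}: the homogeneous estimate cannot produce output Lorentz exponents below $2$, whereas the dual indices $\theta',\phi'$ appearing on the right of (2.4) give the room to manoeuvre that the later decay argument requires.

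Next I would estimate the nonlinearity on the right exactly as in the proof of Proposition \ref{Spacetime_bounds}, now in Lorentz spaces. Using the fractional Leibniz rule (Lemma \ref{Leibni_rule}) to move the derivative onto a single factor, followed by H\"older's inequality in Lorentz spaces (Lemma \ref{H_inequality}), I would split
\[
\big\| \nabla^{\frac{5}{6}}\big(|u|^3u\big) \big\|_{L_t^{p'}L_x^{q',\cdots}} \lesssim \big\| \nabla^{\frac{5}{6}} u \big\|_{L_t^{p}L_x^{q,\cdots}} \, \| u \|^3_{L_t^{\frac{3p}{p-2}} L_x^{\frac{3q}{q-2},\cdots}} ,
\]
the primary exponents being forced by the same arithmetic as before, since $\big(\tfrac{3p}{p-2},\tfrac{3q}{q-2}\big)$ is Schr\"odinger-admissible with $\tfrac56$ spatial derivatives. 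For each of the three $u$-factors I would then invoke the Lorentz Sobolev embedding (Lemma \ref{Sobolev_embedding}) to exchange $u$ for $\nabla^{5/6}u$ at the same secondary index, and finally apply Proposition \ref{Spacetime_bounds} to all four resulting factors, each of which can legitimately be placed at a secondary index $\geq 2$.

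The remaining work, and the main obstacle, is the bookkeeping of the secondary Lorentz indices. Proposition \ref{Spacetime_bounds} only delivers each factor at secondary indices $\geq 2$, so the nesting embedding $L^{r,q_1}\hookrightarrow L^{r,q_2}$ for $q_1\le q_2$ is used to raise each factor's index freely above $2$; H\"older then adds the reciprocals of the four secondary indices, and the duality built into (2.4) converts the resulting index on $\nabla^{5/6}(|u|^3u)$ back to the output indices $\theta,\phi$. I expect the constraint $\phi\ge 1$ to be precisely what (2.4) demands of its spatial input, and the threshold $\theta,\phi \ge \tfrac{2d-2}{d+2}$ to be exactly the value obtained by placing all four factors at the minimal admissible index $2$ and carrying out this reciprocal-addition together with the duality. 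The delicate points are therefore (i) verifying at each step that the chosen secondary indices simultaneously satisfy the hypotheses of (2.4) and of Proposition \ref{Spacetime_bounds}, and (ii) the retarded nature of the integral $\int_0^t$, which is what makes (2.4) applicable via the underlying Christ--Kiselev lemma.
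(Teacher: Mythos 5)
Your proposal follows essentially the same route as the paper's proof: apply the inhomogeneous Lorentz--Strichartz estimate (2.4) with the pair $(p,q)$ to pass to the dual norm, split the nonlinearity via the Leibniz rule and H\"older into $\nabla^{\frac{5}{6}}u$ times three copies of $u$ at the pair $\bigl(\tfrac{3p}{p-2},\tfrac{3q}{q-2}\bigr)$ (admissible with $\tfrac56$ derivatives), and use the nesting of secondary Lorentz indices --- with the thresholds on $\theta,\phi$ arising exactly as you predict --- to land each factor at index $2$ where Proposition \ref{Spacetime_bounds} applies. The argument is correct and matches the paper's.
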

\begin{proof}
  We focus on the initial-value problem.

Applying the Strichartz estimate and the nesting of Lorentz spaces, we find
\begin{align*}
    &\left\| \nabla^{\frac{5}{6}} \int_0^t e^{i(t-s)\Delta} \left[|u|^{3} u  \right](s) ds \right\|_{L_t^\rho L_x^{q, \theta} (\mathbb{R} \times \mathbb{R}^d)}\\
    & \lesssim \left\| \nabla^{\frac{5}{6}} \left( u |u|^{3} \right) \right\|_{L_t^{\frac{p}{p-1},\theta} L_x^{\frac{q}{q-1},\phi}}\\
       &\lesssim \| \nabla^{\frac{5}{6}} u \|_{L_t^{p,\frac{\theta(d+2)}{d-1}} L_x^{q,\frac{\phi(d+2)}{d-1}}} \left\| u \right\|^{3}_{L_t^{\frac{3p}{(p-2)},\theta(d+2)} L_x^{\frac{3q}{q-2},\phi(d+2)}} \\
         &\lesssim \| \nabla^{\frac{5}{6}} u \|_{L_t^{p,2} L_x^{q,2}} \| u \|_{L_t^{\frac{3p}{(p-2)},2} L_x^{\frac{3q}{q-2},2} }.
\end{align*}
  It need to point out that $(\frac{3p}{p-2},\frac{3q}{q-2})$ is a non-endpoint Schr\"{o} dinger-admissible pair with $\frac{5}{6}$ spatial derivative. The proof is completed.
\end{proof}

\section{Proof of Theorem \ref{main thm}}
We now commence the proof of the main theorem. By \textit{time-reversal symmetry}, it suffices to prove the dispersive estimate for time $t \in (0, \infty)$. Using Schwartz functions in $\dot{H}^{\frac{5}{6}}(\mathbb{R}^3) \cap L^{p'}(\mathbb{R}^3)$, we only need to show that equation \eqref{main eq} holds for Schwartz solutions of \eqref{eq}. For $T \in (0, \infty]$, we define
\begin{equation*}
    \|u\|_{X(T)} := \sup_{t \in [0, T)} t^{3\left(\frac{1}{2} - \frac{1}{p}\right)} \|u\|_{L^{p}_x}.
\end{equation*}
We employ a small parameter $0<\eta<1$ that will be chosen later depending only on absolut constants(such as those in the dispersive and Strichartz estimates) and $||u_0||_{\dot{H}^{\frac{5}{6}}_x}$. Thanks to \ref{Spacetime_bounds}, We can decompose$[0,\infty)$ into $J=J(||u_0||_{\dot{H}^{\frac{5}{6}}_x},\eta)$ many intervals $I_j=[T_{j-1},T_j)$ such that 
\begin{equation} \label{boot}
    \| u \|_{L_s^{\frac{6p}{6-p}, 3} L_x^{\frac{3p}{p-2}} (I_j) }< \eta.
\end{equation}
We will show that for each $1\le j\le J$, we have 
\begin{equation} \label{1}
    X(T_j)\lesssim ||u_0||_{L^{p'}_x}+C||u_0||_{\dot{H}^{\frac{5}{6}}_x}X(T_{j-1})+\eta^{3}X(T_j).
\end{equation}
 Choosing $\eta$ sufficiently small to defeat the absolute implicit constant in \eqref{1} and $C(||u_0||_{\dot{H}^{\frac{5}{6}}_x})$, we readily obtain 
$$X(\infty)\le C(||u_0||_{\dot{H}^{\frac{5}{6}}_x})||u_0||_{L^{p'}_x}$$
This is precisely the estimate \eqref{main eq} that we set out to prove.
\par
 Now we decompose the proof of Theorem \ref{main thm} into the cases \( 2 < p \le 6 \) and \( 6 < p \). Following a similar approach as in \cite{Dispersivedecay}, for \( 2 < p \le 6 \), we observe that the linear dispersive decay \eqref{dispersive} is integrable near \( t = 0 \). Adopting the notation from \cite{Dispersivedecay}, we refer to \( 2 < p \le 6 \) as the \textit{integrable case} of Theorem \ref{main thm}. This integrability leads to a simplified argument that parallels the proof in \cite{Dispersivedecayforthemass-critical} for the mass-critical nonlinear Schrödinger equation. We present this proof in Subsection 3.1.

For \( p > 6 \), the linear dispersive decay \eqref{dispersive} is no longer integrable near \( t = 0 \), and thus a more nuanced argument is required. The method we employ also originates from \cite{Dispersivedecay}. This will be completed in Subsection 3.2.
 \subsection{Integrable case}
 In this section, we prove Theorem \ref{main thm} in the case where $2<p\le6$. In addition, the result in this section will be used in the proofs of Lemma \ref{4}.\par
 \begin{proof}
    We therefore focus on \eqref{1}. Fix \( t \in [0, T_j) \) and recall the Duhamel formula (variation of parameters formula):

\[
u(t) = e^{i t \Delta} u_0 \mp i \int_{0}^{t} e^{i(t-s)\Delta} \big[ |u|^{3} u \big](s) \, ds.
\]
By the linear dispersive decay \eqref{dispersive}, for the linear term we immediately obtain:
\begin{equation} \label{2}
\big\| e^{i t \Delta} u_0 \big\|_{X(T_j)} \lesssim \| u_0 \|_{L^{p'}_x}.
\end{equation}
Thus we only need to focus on the nonlinear term.
Using the linear dispersive estimate \eqref{dispersive} and Hölder's inequality, we may estimate

\[
\Big\| \int_{0}^{t} e^{i(t-s)\Delta} \big[ |u|^{3} u \big](s) \, ds \Big\|_{L_x^p}
\lesssim \int_{0}^{t} |t-s|^{-3\bigl( \frac{1}{2} - \frac{1}{p} \bigr)}
\| u(s) \|_{L^p} \, \| u(s) \|_{L_x^{\frac{3p}{p-2}}}^{3} \, ds.
\]
From the earlier definition, \( |s|^{3\bigl( \frac{1}{2} - \frac{1}{p} \bigr)} \| u(s) \|_{L^p} \le \| u \|_{X(s)} \). Hence,
\begin{equation}\label{20}
    \Big\| \int_{0}^{t} e^{i(t-s)\Delta} \big[ |u|^{3} u \big](s) \, ds \Big\|_{L_x^p}
\lesssim \int_{0}^{t} |t-s|^{-3\bigl( \frac{1}{2} - \frac{1}{p} \bigr)}
|s|^{-3\bigl( \frac{1}{2} - \frac{1}{p} \bigr)}
\| u \|_{X(s)} \, \| u(s) \|_{L_x^{\frac{3p}{p-2}}}^{3} \, ds.
\end{equation} 

Then we decompose $[0,t)$ into $[0,\frac{t}{2})$ and $[\frac{t}{2},t)$. For $s\in[0,\frac{t}{2})$, we note that $|t-s|\approx|t|$, and for $s\in [\frac{t}{2},t)$, we note that $|s|\approx|t|$. So we find
\begin{align*}
\left\| \int_{0}^{t} e^{i(t-s) \Delta} \left[ |u|^{3} u \right] (s) \, ds \right\|_{L_x^p}
&\lesssim |t|^{-3\left( \frac{1}{2} - \frac{1}{p} \right)} 
\int_{0}^{t/2} |s|^{-3\left( \frac{1}{2} - \frac{1}{p} \right)} 
\|u\|_{X(s)} \|u(s)\|^3_{L_x^{\frac{3p}{p-2}}} \, ds \\
&\quad + |t|^{-3\left( \frac{1}{2} - \frac{1}{p} \right)} 
\int_{t/2}^{t} |t - s|^{-3\left( \frac{1}{2} - \frac{1}{p} \right)} 
\|u\|_{X(s)} \|u(s)\|^3_{L_x^{\frac{3p}{p-2}}} \, ds.
\end{align*}
Since \( 2 < p \le 6 \), both \( |s|^{-3(\frac{1}{2} - \frac{1}{p})} \) and \( |t-s|^{-3(\frac{1}{2} - \frac{1}{p})} \) belong to the Lorentz space \( L_s^{\frac{2p}{3p-6}, \infty} \). By Hölder's inequality, we obtain
\[
\left\| \int_{0}^{t} e^{i(t-s)\Delta} \left[ |u|^{3} u \right] (s) \, ds \right\|_{L_x^p} 
\lesssim |t|^{-3(\frac{1}{2}-\frac{1}{p})}\big\| 
\| u \|_{X(s)} \| u(s) \|_{L_x^{\frac{3p}{p-2}}}^{3}\big\|_{L_s^{\frac{2p}{6-p},1}([0,t))} 
\left\| s^{-3(\frac{1}{2}-\frac{1}{p})} \right\|_{L_S^{\frac{2p}{3(p-2)},\infty}}.
\]
\[\lesssim |t|^{-3(\frac{1}{2}-\frac{1}{p})}\big\| 
\| u \|_{X(s)} \| u(s) \|_{L_x^{\frac{3p}{p-2}}}^{3}\big\|_{L_s^{\frac{2p}{6-p},1}([0,t))}
\]
For $t \in [0, T_j)$, we now decompose $[0,t)$ into $[0,t)\cap[0,T_{j-1})$ and $[0,t)\cap I_j$. Doing so, \eqref{boot} and proposition \ref{Spacetime_bounds} then imply
\begin{align*}
&\big\| 
\| u \|_{X(s)} \| u(s) \|_{L_x^{\frac{3p}{p-2}}}^{3}\big\|_{L_s^{\frac{2p}{6-p},1}([0,t))} \\
&\leq \|u\|_{X(T_{j-1})} \|u\|_{L_s^{\frac{6p}{6-p},3} L_x^{\frac{3p}{p-2}}([0,T_{j-1}))}^3 + \|u\|_{X(T_j)} \|u\|_{L_s^{\frac{6p}{6-p},3} L_x^{\frac{3p}{p-2}}(I_j)}^3 \\
&\leq C(\|u_0\|_{\dot{H}^{\frac{5}{6}}})\|u\|_{X(T_{j-1})} + \eta^{3}\|u\|_{X(T_j)}.
\end{align*}
Along with the linear term (\ref{2}), this yields the bootstrap statement (\ref{1}) and concludes the proof of the integrable case of Theorem \ref{main thm}.
\end{proof}
\subsection{Non-integrable case} 
Now we consider the case \( p > 6 \). We follow the structure of the proof of the integrable case, taking care now to avoid the non-integrability of the linear dispersive decay \eqref{dispersive} near \( t = 0 \).

Following the notation in \cite{Dispersivedecay}, in the non-integrable case, for each \( t > 0 \) we will decompose the integral over \( [0, t) \) into an early-time interval \( [0, \frac{t}{2}) \) and a late-time interval \( [\frac{t}{2}, t) \). Unlike the integrable case, these intervals must be treated separately.

We first consider the early-time interval \( [0, \frac{t}{2}) \). On this interval, we carefully apply the integrable case of Theorem \ref{main thm} to produce a factor of \( \| u_0 \|_{L^{p'}} \). The purpose of this is to avoid generating a non-integrable term \( |s|^{-3(\frac{1}{2} - \frac{1}{p})} \), as seen in equation (\ref{20}). Since this argument is independent of the bootstrap structure, we present this estimate in Lemma \ref{4}.
\begin{lemma} \label{4}
    (Early-time interval). Fix $6<p<\infty$. Suppose that $u_0\in H^{\frac{5}{6}}\cap L^{p'}(\mathbb{R}^3)$ satisfies the hypotheses of Theorem \ref{well-posedness}. Then the corresponding solution u(t) to (NLS) with initial data $u_0$ satisfies
    \[
\left\|\int_{0}^{t/2} e^{i(t-s)\Delta} \big[|u|^3 u\big](s) \, ds\right\|_{L^p} 
\leq C(\|u_0\|_{\dot{H}^{\frac{5}{6}}}) |t|^{-3(\frac{1}{2}-\frac{1}{p})} \|u_0\|_{L^{p'}}.
\]
\end{lemma}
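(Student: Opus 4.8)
The plan is to start from the Duhamel kernel and exploit that on the early window the free propagator is evaluated at a time comparable to $t$. Fixing $t>0$ and $s\in[0,t/2)$ we have $|t-s|\sim|t|$, so the linear dispersive estimate \eqref{dispersive} gives
\[
\Big\|\int_0^{t/2}e^{i(t-s)\Delta}\big[|u|^3u\big](s)\,ds\Big\|_{L^p}\lesssim\int_0^{t/2}|t-s|^{-3(\frac12-\frac1p)}\big\||u|^3u(s)\big\|_{L^{p'}}\,ds\lesssim|t|^{-3(\frac12-\frac1p)}\int_0^{t/2}\big\||u|^3u(s)\big\|_{L^{p'}}\,ds.
\]
Because the weight $|t-s|^{-3(\frac12-\frac1p)}$ has been frozen at $|t|^{-3(\frac12-\frac1p)}$, the troublesome singular factor $|s|^{-3(\frac12-\frac1p)}$ never appears; the lemma is thereby reduced to the $t$-independent spacetime bound $\int_0^\infty\||u|^3u(s)\|_{L^{p'}}\,ds\lesssim C(\|u_0\|_{\dot H^{5/6}})\|u_0\|_{L^{p'}}$.

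For this spacetime integral I would write $|u|^3u$ as a product of four copies of $u$ and apply H\"older's inequality (Lemma \ref{H_inequality}) in space, $\||u|^3u\|_{L^{p'}}=\|u\|_{L^{4p'}}^4$, noting that $4p'\in(2,6]$ for every $p>6$, so each factor lies in the range covered by the integrable case of Theorem \ref{main thm}. The decisive device is to interpolate \emph{each} factor between two sources of control: the integrable case, which for $2<\tilde p\le 6$ supplies the decay $\|u(s)\|_{L^{\tilde p}}\lesssim C(\|u_0\|_{\dot H^{5/6}})\,|s|^{-3(\frac12-\frac1{\tilde p})}\|u_0\|_{L^{\tilde p'}}$ with an \emph{integrable} time weight (exponent $<1$) and a factor of the data; and the global Lorentz--Strichartz spacetime bounds of Proposition \ref{Spacetime_bounds} and Corollary \ref{Nonlinear_correction_bounds}, which control $u$ in admissible norms $L^\sigma_sL^\rho_x$ by $C(\|u_0\|_{\dot H^{5/6}})$ alone. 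Inserting these into a H\"older estimate in the (Lorentz) time variable produces a product of a power of $\|u_0\|_{L^{\tilde p'}}$, a power of a Strichartz norm, and an integrable power of $|s|$.

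It then remains to fix the interpolation and H\"older exponents. I would interpolate the data norm $\|u_0\|_{L^{\tilde p'}}$ between the endpoint $\|u_0\|_{L^{p'}}$ and the critical Sobolev endpoint $\|u_0\|_{L^{9/2}}\lesssim\|u_0\|_{\dot H^{5/6}}$ (Lemma \ref{Sobolev_embedding}, using $\dot H^{5/6}(\mathbb R^3)\hookrightarrow L^{9/2}$), and tune the fractional allocation so that the total power of $\|u_0\|_{L^{p'}}$ is exactly one, while all $L^{9/2}$ contributions and all Strichartz norms are absorbed into $C(\|u_0\|_{\dot H^{5/6}})$; the residual freedom is then consumed by requiring the time exponents to sum to $1$, so the integral over $[0,\infty)$ closes in $L^1_s$, and the spatial exponents to sum to $1/p'$.

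The main obstacle I anticipate is exactly this exponent bookkeeping. The data factor $\|u_0\|_{L^{p'}}$ can only be generated through the integrable case, whose admissible exponents $\tilde p\le 6$ force $\tilde p'>p'$; recovering a full power of $\|u_0\|_{L^{p'}}$ therefore requires spreading the $L^{p'}$-weight across at least two interpolated factors rather than one. At the same time, using the integrable decay alone on all four factors reproduces the non-integrable weight $|s|^{-(3+3/p)}$ near $s=0$, so each factor must retain a genuine Strichartz component to secure integrability at the origin; the scheme closes only if these competing demands are met simultaneously for every $p\in(6,\infty)$, which I expect to need a short case analysis in $p$ (according to whether the pair $(\,\cdot\,,4p')$ lies above, on, or below the admissible line, i.e.\ roughly $6<p<9$, $p=9$, $p>9$) together with a careful choice of Lorentz second indices so that the time-H\"older inequality closes at the $L^1_s$ endpoint.
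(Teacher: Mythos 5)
Your opening reduction coincides with the paper's: on $[0,t/2)$ one has $|t-s|\sim|t|$, so the dispersive estimate freezes the decay at $|t|^{-3(\frac12-\frac1p)}$ and the lemma reduces to the $t$-independent bound $\int_0^\infty\||u|^3u(s)\|_{L^{p'}_x}ds\le C(\|u_0\|_{\dot H^{5/6}})\|u_0\|_{L^{p'}}$. After that you genuinely diverge, and your route does work. The paper splits $u=e^{is\Delta}u_0+u_{\mathrm{nl}}$, expands $|u|^3u$ into $(e^{is\Delta}u_0)^\alpha u_{\mathrm{nl}}^{4-\alpha}$, treats the purely linear term with the frequency-interpolation estimate of Lemma \ref{5}, and in the mixed terms places $\beta=\tfrac{29p-18}{14p}$ powers on the decay side and $4-\beta$ powers on the Strichartz side; since $4-\beta=\tfrac{27p+18}{14p}<2$ for $p>18$, this forces the sub-$2$ Lorentz indices of Corollary \ref{Nonlinear_correction_bounds}, which is precisely why the linear/nonlinear splitting is needed there. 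Your symmetric scheme tilts the allocation the other way and thereby avoids all of this machinery: writing $\|u(s)\|_{L^{4p'}}\le\|u(s)\|_{L^{\tilde p}}^{a/4}\|u(s)\|_{L^{\rho}}^{b/4}$ with $a+b=4$, the spatial-interpolation, admissibility (with $\tfrac56$ derivatives) and time-exponent constraints collapse to $a(\tfrac79-\tfrac1{\tilde p})=\tfrac1{p'}-\tfrac29$, which is \emph{identical} to the condition $a\mu=1$ coming from interpolating $\|u_0\|_{L^{\tilde p'}}\lesssim\|u_0\|_{L^{p'}}^{\mu}\|u_0\|_{L^{9/2}}^{1-\mu}$ — the system is consistent by scaling, with $\tilde p$ left free. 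Choosing $\tilde p=3$ uniformly gives $a=\tfrac94\cdot\tfrac1{p'}-\tfrac12\in(\tfrac{11}8,\tfrac74)$, hence $b\in(\tfrac94,\tfrac{21}8)$, $\gamma=\tfrac a2<1$, $\rho\in(\tfrac{27}5,7)$ with $\tilde p<4p'<\rho$, and $\sigma>2$; since $b>2$, the time-H\"older at the $L^1_s$ endpoint only requires Lorentz second index $b$, which Proposition \ref{Spacetime_bounds} (index $2$, plus nesting) already supplies. So none of the obstacles you anticipate materializes: no case analysis in $p$, no decomposition into $e^{is\Delta}u_0$ and $u_{\mathrm{nl}}$, no Lemma \ref{5}, and no sub-$2$ Lorentz control are needed. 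Your approach buys a shorter, single-interpolation argument; the paper's buys a template that survives when the Strichartz weight is forced below $2$ (as in the energy-critical problem it adapts). The one thing you must still supply is exactly the exponent verification sketched above — it is the entire content of the step you deferred, and without it the proof is not complete.
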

To prove this lemma, we need the following lemma concerning estimates for the linear term.
\begin{lemma} \label{5}
    Fix \( 1 \le q \le 2 \) and suppose that \( f \in L^q \cap H^{\frac{5}{6}}(\mathbb{R}^3) \). Then
    \[
    \| e^{it\Delta} f \|_{L_t^4 L_x^{4q}} \lesssim \| f \|_{L^q}^{1/4} \| f \|_{H^{\frac{5}{6}}}^{3/4}.
    \]
\end{lemma}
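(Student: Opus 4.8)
The plan is to prove Lemma \ref{5} by interpolating between two endpoint Strichartz-type estimates for the linear Schr\"odinger evolution, so that the exponents $1/4$ and $3/4$ emerge as the interpolation weights. First I would identify the two endpoints. On one side, I want an estimate controlled purely by $\|f\|_{L^q}$; on the other side, an estimate controlled purely by the $\dot{H}^{\frac{5}{6}}$ (or $H^{\frac{5}{6}}$) norm. The target exponent pair is $(L_t^4 L_x^{4q})$, and I expect the $L^2$-based endpoint to come from a genuine Strichartz admissible pair while the $L^q$-based endpoint comes from the dispersive decay estimate \eqref{dispersive} together with a Hardy--Littlewood--Sobolev (Young--O'Neil) argument in time.

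Concretely, I would first establish the $L^q$ endpoint. Write $e^{it\Delta}f$ and apply the pointwise dispersive bound $\|e^{it\Delta}f\|_{L_x^{r}} \lesssim |t|^{-3(\frac12-\frac1r)}\|f\|_{L_x^{r'}}$ with $r$ chosen so the spatial exponent matches the desired $4q$, then take the $L_t^4$ norm. Since the decay rate $|t|^{-3(\frac12-\frac1r)}$ lies in a weak-$L^p$ (Lorentz $L^{\cdot,\infty}$) space in the time variable, I can use the Hardy--Littlewood--Sobolev inequality, or more cleanly its Lorentz-space refinement (Lemma \ref{ convolutional_inequality}), to absorb the singular kernel and produce a clean bound of the form $\|e^{it\Delta}f\|_{L_t^4 L_x^{4q}}\lesssim \|f\|_{L^q}$ for the appropriate endpoint value of $q$. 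On the other side, I would invoke the Lorentz--Strichartz estimate (Proposition \ref{Strichartz_estimates}) at the $L^2$ level together with Bernstein/Sobolev embedding (Lemma \ref{Sobolev_embedding}) to trade the $5/6$ derivatives for integrability, giving the second endpoint bounded by $\|f\|_{\dot H^{\frac56}}$.

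With the two endpoints in hand, I would interpolate. The weights must be arranged so that $\tfrac14\cdot(\text{$L^q$ endpoint})+\tfrac34\cdot(\text{$H^{5/6}$ endpoint})$ reproduces the exponent $4q$ on the right spatial index and the factor structure $\|f\|_{L^q}^{1/4}\|f\|_{H^{5/6}}^{3/4}$. The cleanest route is a complex/real interpolation of the linear operator $f\mapsto e^{it\Delta}f$ between the source spaces $L^q$ and $H^{\frac56}$ with $\theta=3/4$, using Hunt interpolation (Lemma \ref{Hunt_interpolation}) to pass through Lorentz spaces if needed; alternatively, a direct H\"older-in-space splitting of $e^{it\Delta}f$ combined with the two bounds yields the product estimate after checking that the spatial exponents balance. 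I would verify that the dimensional bookkeeping in $d=3$ forces $s_c=5/6$ to be exactly the derivative count that makes the $\theta=3/4$ split consistent.

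The main obstacle will be the exponent bookkeeping, i.e.\ confirming that the two endpoint pairs are simultaneously admissible and that interpolating them at weight $3/4$ lands exactly on $(4,4q)$ with the stated norm powers. In particular, I expect to have to choose the $L^q$-endpoint spatial exponent and the corresponding time exponent carefully so that the Hardy--Littlewood--Sobolev kernel $|t|^{-3(\frac12-\frac1r)}$ is integrable in the relevant Lorentz sense (this is where the restriction $1\le q\le 2$ enters), and to reconcile the homogeneous $\dot H^{5/6}$ scaling on one endpoint with the inhomogeneous $H^{5/6}$ norm appearing in the statement. Once these indices are pinned down, the estimate follows from the cited lemmas with no further analytic input.
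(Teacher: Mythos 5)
Your overall strategy---two endpoint bounds, one seeing only $\|f\|_{L^q}$ and one seeing only $\|f\|_{H^{5/6}}$, combined so that the exponents $1/4$ and $3/4$ emerge---has the right shape, but the specific endpoints you propose do not exist, and the mechanism you propose for combining them does not yield the stated product bound. The fatal step is the ``$L^q$ endpoint'': you cannot obtain a global-in-time bound $\|e^{it\Delta}f\|_{L_t^4L_x^{r}}\lesssim\|f\|_{L^{r'}}$ from the dispersive estimate, because $e^{it\Delta}f$ involves no time convolution to which Hardy--Littlewood--Sobolev or Young--O'Neil could be applied; the dispersive estimate only gives the pointwise-in-time bound $\|e^{it\Delta}f\|_{L_x^r}\lesssim |t|^{-3(\frac12-\frac1r)}\|f\|_{L^{r'}}$, and $|t|^{-a}\notin L_t^4(\mathbb{R})$ for any $a$ (one only gets $L_t^{4,\infty}$ at the single exponent $a=\frac14$, i.e.\ $r=\frac{12}{5}$, $r'=\frac{12}{7}$, which is one fixed pair, not a family matching $L^q\to L_x^{4q}$ for all $q\in[1,2]$). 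Moreover, interpolating a linear operator between two source spaces $X_0=L^q$ and $X_1=H^{5/6}$ yields control by $\|f\|_{[X_0,X_1]_\theta}$, not automatically by $\|f\|_{X_0}^{1/4}\|f\|_{X_1}^{3/4}$ with the target frozen at $L_t^4L_x^{4q}$; and the exponent arithmetic does not place the pair $(4,4q)$ with source $L^q$ on the interpolation segment between a Strichartz pair and a dispersive-type endpoint for general $q\in[1,2]$ (one checks it would force $q=9/2$).

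The paper's proof supplies the missing mechanism: a Littlewood--Paley decomposition. For each dyadic piece $f_N$ it uses the single Strichartz pair $(4,3)$ together with Bernstein's inequality to obtain the two frequency-localized bounds
\[
\|e^{it\Delta}f_N\|_{L_t^4L_x^{4q}}\lesssim N^{\frac{2q-9}{12q}}\|\nabla^{5/6}f_N\|_{L^2},
\qquad
\|e^{it\Delta}f_N\|_{L_t^4L_x^{4q}}\lesssim N^{\frac{9-2q}{4q}}\|f_N\|_{L^q},
\]
whose powers of $N$ have opposite signs for $q\le 2$; summing the first over $N>M$, the second over $N\le M$, and optimizing the cutoff $M$ produces exactly the weights $1/4$ and $3/4$. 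This is a real-interpolation argument carried out by hand on the frequency parameter rather than an interpolation of the operator between source spaces, and it uses no dispersive estimate at all. Your proposal is missing this decomposition, and without it (or an equivalent device) the argument does not close.
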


\begin{proof}
    Consider a Littlewood--Paley piece \( f_N \) for some \( N \in 2^{\mathbb{Z}} \). Since \( (4, 3) \) is a Schr\"{o}dinger admissible pair, we may apply Strichartz estimates and Bernstein's inequality \ref{Bernstein_inequality} to obtain
    \[
    \bigl\| e^{it\Delta} f_N \bigr\|_{L_t^4 L_x^{4q}} 
    \lesssim N^{\frac{4q-3}{4q}} \bigl\| e^{it\Delta} f_N \bigr\|_{L_t^4 L_x^3}
    \lesssim N^{\frac{4q-3}{4q}} \| f_N \|_{L^2}.
    \]
    Applying Bernstein's inequality \ref{Bernstein_inequality} once more, we obtain two distinct estimates for \( e^{it\Delta} f_N \).
\begin{equation} \label{25}
    \left\| e^{it\Delta} f_N \right\|_{L_t^4 L_x^{4q}} \lesssim N^{\frac{2q-9}{12q}} \left\| \nabla^{\frac{5}{6}} f_N \right\|_{L^2},
\end{equation}
\begin{equation} \label{26}
    \left\| e^{it\Delta} f_N \right\|_{L_t^4 L_x^{4q}} \lesssim N^{\frac{9-2q}{4q}} \left\| f_N \right\|_{L^q}. 
\end{equation}
We now decompose \( f \) into high-frequency and low-frequency parts based on a frequency cutoff parameter \( M \in 2^{\mathbb{Z}} \). We apply \eqref{25} to the high-frequency part and \eqref{26} to the low-frequency part. Consequently, we obtain

\begin{align*}
\bigl\| e^{it\Delta} f \bigr\|_{L_t^4 L_x^{4q}} 
&\lesssim \sum_{N > M} N^{\frac{2q-9}{12q}} \| f_N \|_{{H}^{\frac{5}{6}}} 
   + \sum_{N \le M} N^{\frac{9-2q}{4q}} \| f_N \|_{L^q} \\
&\lesssim M^{\frac{2q-9}{12q}} \| f \|_{{H}^{\frac{5}{6}}} 
   + M^{\frac{9-2q}{4q}} \| f \|_{L^q}.
\end{align*}

Choosing
\[
M^{\frac{9-2q}{3q}} \approx \frac{\| f \|_{{H}^{\frac{5}{6}}}}{\| f \|_{L^q}},
\]
we thus complete the proof of the lemma.
\end{proof}
To prove Lemma \ref{4}, we similarly follow the approach in \cite{Dispersivedecay} by decomposing \( u \) into a linear term and a nonlinear correction:
\begin{equation}
    u(t) = e^{it\Delta} u_0 \mp i \int_{0}^{t} e^{i(t-s)\Delta} \bigl[ |u|^{3} u \bigr](s) \, ds 
         = e^{it\Delta} u_0 + u_{\text{nl}}.
\end{equation}
We may now proceed with the proof of Lemma \ref{4}.
\begin{proof}
    As previously noted, we observe that \( |t-s| \approx |t| \) for \( s \in [0, \frac{t}{2}) \). Then by the linear dispersive decay,

\[
\Bigl\| \int_{0}^{t/2} e^{i(t-s)\Delta} \bigl[ |u|^{3} u \bigr](s) \, ds \Bigr\|_{L^p} 
\lesssim |t|^{-3\bigl( \frac{1}{2} - \frac{1}{p} \bigr)} 
\int_{0}^{t/2} \bigl\| \bigl[ |u|^{3} u \bigr](s) \bigr\|_{L^{p'}} \, ds.
\]

Substituting the decomposition of \( u \) and applying Lemma \ref{5}, we obtain
    \begin{equation} \label{9}
        \begin{aligned}
\left\| \int_{0}^{t/2} e^{i(t-s)\Delta} \left[ |u|^3 u \right](s) ds \right\|_{L^p}
&\lesssim |t|^{-3\left( \frac{1}{2} - \frac{1}{p} \right)} \left[ \left\| e^{is\Delta} u_0 \right\|_{L_s^4 L_x^{4p'}}^3 + \sum_{\alpha=0}^3 \int_{0}^{t/2} \left\| \left( e^{is\Delta} u_0 \right)^\alpha u_{nl}^{4-\alpha} \right\|_{L^{\frac{p}{p-1}}} ds \right] \\
&\leq C(\| u_0 \|_{\dot{H}^{\frac{5}{6}}}) |t|^{-3\left( \frac{1}{2} - \frac{1}{p} \right)} \left[ \| u_0 \|_{L^{p'}} + \sum_{\alpha=0}^2 \int_{0}^{t/2} \left\| \left( e^{is\Delta} u_0 \right)^\alpha u_{nl}^{4-\alpha} \right\|_{L^{\frac{p}{p-1}}} ds \right] \\
&= C(\| u_0 \|_{\dot{H}^{\frac{5}{6}}}) |t|^{-3\left( \frac{1}{2} - \frac{1}{p} \right)} \left[ \| u_0 \|_{L^{p'}} + \sum_{\alpha=0}^3 I_\alpha \right].
\end{aligned}
    \end{equation}
For notational convenience, we define \( \beta = \frac{29p-18}{14p} \). By Hölder's inequality, we may estimate \( I_{\alpha} \) as

\begin{equation} \label{7}
I_\alpha \lesssim \Bigl\| \, 
\bigl\| e^{is\Delta} u_0 \bigr\|_{L_x^{\frac{87p-54}{35p}}}^{\alpha \vee \beta} 
\bigl\| u_{\text{nl}}(s) \bigr\|_{L_x^{\frac{87p-54}{35p}}}^{\beta - \alpha \vee \beta} 
\bigl\| \bigl( e^{is\Delta} u_0 \bigr)^{\alpha - \alpha \wedge \beta} 
u_{\text{nl}}^{3 - \alpha \wedge \beta}(s) \bigr\|_{L_x^{\frac{6p}{p-6}}} 
\Bigr\|_{L_s^1},
\end{equation}

where the nonlinear term \( u_{\text{nl}} \) satisfies
\begin{equation} \label{6}
    \| u_{nl}(s) \|_{L_x^{\frac{87p-54}{35p}}} \leq \| e^{is\triangle} u_0 \|_{L_x^{\frac{87p-54}{35p}}} + \| u(s) \|_{L_x^{\frac{87p-54}{35p}}} \leq C(\| u_0 \|_{\dot{H}^{\frac{5}{6}}}) |s|^{-3(\frac{1}{2}-\frac{35p}{87p-54})} \| u_0 \|_{L^{\frac{87p-54}{52p-54}}}.
\end{equation}
A direct computation combined with the Sobolev embedding yields

\begin{equation}\label{8}
    \| u_0 \|_{L_x^{\frac{87p-54}{52p-54}}}^\beta 
    \lesssim \| u_0 \|_{L_x^{p'}} \, \| u_0 \|_{L_x^{\frac{9}{2}}}^{\beta-1} 
    \leq C\bigl( \| u_0 \|_{\dot{H}^{\frac{5}{6}}} \bigr) \, \| u_0 \|_{L_x^{p'}}.
\end{equation}
Substituting \eqref{6} and \eqref{8} into \eqref{7}, we obtain
\[
\begin{aligned}
I_\alpha &\lesssim \left\| |s|^{-\frac{17p-54}{28p}} \| u_0 \|_{L_x^{\frac{87p-54}{52p-54}}}^\beta \left\| \left( e^{is\Delta} u_0 \right)^{\alpha - \alpha \vee \beta} u_{nl}^{3 - \alpha \wedge \beta}(s) \right\|_{L_x^{\frac{6p}{p-6}}} \right\|_{L_s^1} \\
&\lesssim C(\| u_0 \|_{\dot{H}^{\frac{5}{6}}}) \| u_0 \|_{L_x^{p'}} \left\| |s|^{-\frac{17p-54}{28p}} \left\| \left( e^{is\Delta} u_0 \right)^{\alpha - \alpha \vee \beta} u_{nl}^{3 - \alpha \wedge \beta}(s) \right\|_{L_x^{\frac{6p}{p-6}}} \right\|_{L_s^1}.
\end{aligned}
\]
Since for \( p > 6 \) we have \( 0 < \frac{17p-54}{28p} < 1 \), it follows that \( |s|^{-\frac{17p-54}{28p}} \in L^{\frac{28p}{17p-54}, \infty} \). Applying Hölder's inequality, we obtain

\[
I_{\alpha} \lesssim C\bigl( \| u_{0} \|_{\dot{H}^{\frac{5}{6}}} \bigr) \,
\| u_{0} \|_{L_{x}^{p'}} \,
\Bigl\| \bigl( e^{i s \Delta} u_{0} \bigr)^{\alpha - \alpha \vee \beta} 
u_{\text{nl}}^{3 - \alpha \wedge \beta}(s) \Bigr\|_{L_{s}^{\frac{28p}{11p+54}, 1} L_{x}^{\frac{6p}{p-6}}} .
\]

Noting that \( \alpha - \alpha \vee \beta + 4 - \alpha \wedge \beta = 4 - \beta \), and applying Hölder's inequality once again, we derive the estimate
\[
\begin{aligned}
I_{\alpha} &\lesssim C\left(\|u_{0}\|_{\dot{H}^{\frac{5}{6}}}\right)\|u_{0}\|_{L_{x}^{p'}}\left\|e^{is\Delta}u_{0}\right\|_{L_{s}^{\frac{28p(4-\beta)}{11p+54},\infty}L_{x}^{\frac{6p(4-\beta)}{p-6}}}^{\alpha-\alpha\vee\beta}\left\|u_{nl}\right\|_{L_{s}^{\frac{28p(4-\beta)}{11p+54},4-\alpha\wedge\beta}L_{x}^{\frac{6p(4-\beta)}{p-6}}}^{4-\alpha\wedge\beta} \\
&= C\left(\|u_{0}\|_{\dot{H}^{\frac{5}{6}}}\right)\|u_{0}\|_{L_{x}^{p'}}\left\|e^{is\Delta}u_{0}\right\|_{L_{s}^{\frac{54p+36}{11p+54},\infty}L_{x}^{\frac{81p+54}{7p-42}}}^{\alpha-\alpha\vee\beta}\left\|u_{nl}\right\|_{L_{s}^{\frac{54p+36}{11p+54},4-\alpha\wedge\beta}L_{x}^{\frac{81p+54}{7p-42}}}^{4-\alpha\wedge\beta}.
\end{aligned}
\]
A direct computation shows that \( \left( \frac{54p+36}{11p+54}, \frac{81p+54}{7p-42} \right) \) is Schr\"{o}dinger-admissible with \( \frac{5}{6} \) spatial derivatives, and that

\begin{equation*}
    4 - \alpha \wedge \beta \ge 4 - \beta = \frac{27p + 18}{14p} \ge \frac{4}{5}
\end{equation*}
holds for \( p > 6 \). Using Corollary \ref{Nonlinear_correction_bounds} and Proposition \ref{Spacetime_bounds}, we obtain
\[
I_{\alpha} \le C\bigl( \|u_{0}\|_{\dot{H}^{\frac{5}{6}}} \bigr) \, \|u_{0}\|_{L^{p'}},
\]
for \( \alpha = 0, 1, 2, 3\). Together with \eqref{9}, this completes the proof of Lemma \ref{4}.
\end{proof}
We now turn to the proof for the late-time interval \( [\frac{t}{2}, t) \). On this interval, we employ a Sobolev embedding before applying the linear dispersive decay \eqref{dispersive}. This reduces the Lebesgue exponent below the integrability threshold 6. This part of the proof will be incorporated directly into the proof of Theorem \ref{main thm}.

\noindent \textbf{Proof of Theorem \ref{main thm}}. Next, we consider the case \( 6 < p < \infty \). As before, it suffices to treat \( t > 0 \). By the density of Schwartz functions in \( \dot{H}^{\frac{5}{6}} \cap L^{p'} \), we may restrict attention to Schwartz solutions of (NLS). By proposition \ref{Spacetime_bounds}, we decompose $[0,\infty)$ into $J=J(\eta,\|u_0\|_{\dot{H}^{\frac{5}{6}}})$ many intervals $I_j=[T_{j-1},T_j)$ on which 
\begin{equation}\label{boot2}
    \| \nabla^{\frac{5}{6}} u \|_{L_t^{\frac{6p}{p+6},3} L_x^{\frac{18p}{7p-12}} (I_j)} < \eta.
\end{equation}
Again, we only need to prove \eqref{1}. Combining the linear dispersive decay \eqref{dispersive} with Lemma \ref{4}, the Duhamel formula implies that for all \( j \),

\begin{align} \label{12}
\|u\|_{X(T_j)} 
&\lesssim C\bigl( \|u_{0}\|_{\dot{H}^{\frac{5}{6}}} \bigr) \|u_{0}\|_{L^{p'}} 
+ \Bigl\| \int_{\frac{t}{2}}^{t} e^{i(t-s)\Delta} \bigl[ |u|^{3} u \bigr](s) \, ds \Bigr\|_{X(T_j)} .
\end{align}
Thus it remains only to treat the late-time interval \( [t/2, t) \).
Applying the Sobolev embedding together with the linear dispersive decay \eqref{dispersive}, we find that
\begin{align}
\left\| \int_{\frac{t}{2}}^{t} e^{i(t-s)\Delta} \left[ |u|^{3} u \right](s) ds \right\|_{L^p}
&\lesssim \int_{\frac{t}{2}}^{t} \left\| e^{i(t-s)\Delta} |\nabla| \left[ |u|^{3} u \right](s) \right\|_{L_x^{\frac{3p}{p+3}}} ds \\
&\lesssim \int_{\frac{t}{2}}^{t} |t - s|^{-\frac{p-6}{2p}} \left\| |\nabla| \left[ |u|^{3} u \right](s) \right\|_{L_x^{\frac{3p}{2p-3}}} ds.
\end{align}
Applying the Gagliardo--Nirenberg inequality and Hölder's inequality, we obtain
\begin{align*}
\Bigl\| \int_{\frac{t}{2}}^{t} e^{i(t-s)\Delta} \bigl[ |u|^{3} u \bigr](s) \, ds \Bigr\|_{L^p}
&\lesssim \int_{\frac{t}{2}}^{t} |t - s|^{-\frac{p-6}{2p}} 
\|u(s)\|_{L_x^p} \, 
\bigl\| \nabla^{\frac{5}{6}} u(s) \bigr\|_{L_x^{\frac{18p}{7p-12}}}^{3} \, ds.
\end{align*}

We note that \( |s| \approx |t| \) for \( s \in [t/2, t) \). Using Hölder's inequality, we then obtain
\begin{equation} \label{10}
\begin{aligned}
\left\| \int_{\frac{t}{2}}^{t} e^{i(t-s)\Delta} \left[ |u|^{3} u \right](s) ds \right\|_{L^p}
&\lesssim |t|^{-3\left(\frac{1}{2} - \frac{1}{p}\right)} \int_{\frac{t}{2}}^{t} |t - s|^{-\frac{p-6}{2p}} \|u\|_{X(s)} \|\nabla^{\frac{5}{6}} u(s)\|_{L_x^{\frac{18p}{7p-12}}}^{3} ds \\
&\approx|t|^{-3\left(\frac{1}{2} - \frac{1}{p}\right)} \left\| \|u(s)\|_{X(s)} \|\nabla^{\frac{5}{6}} u(s)\|_{L_x^{\frac{18p}{7p-12}}}^{3} \right\|_{L_s^{\frac{2p}{p+6},1} \left( [0,t) \right)}.
\end{aligned}
\end{equation}
Taking the supremum over \( t \in [0, T_j) \) and decomposing \( [t/2, t) \) into \( [t/2, t) \cap I_j \) and \( [t/2, t) \cap [0, T_{j-1}) \), we obtain the following. Since \( \bigl( \frac{6p}{p+6}, \frac{18p}{7p-12} \bigr) \) is a non-endpoint Schr\"{o}dinger-admissible pair, Propositions \ref{Spacetime_bounds} and \eqref{boot2} imply

\begin{equation}\label{11}
\begin{aligned}
\Bigl\| \int_{\frac{t}{2}}^{t} e^{i(t-s)\Delta} \bigl[ |u|^{3} u \bigr](s) \, ds \Bigr\|_{X(T_j)}
&\lesssim \|u\|_{X(T_{j-1})} \, 
\bigl\| \nabla^{\frac{5}{6}} u \bigr\|_{L_t^{\frac{6p}{p+6},3} L_x^{\frac{18p}{7p-12}}}^{3} 
+ \|u\|_{X(T_j)} \, 
\bigl\| \nabla^{\frac{5}{6}} u \bigr\|_{L_t^{\frac{6p}{p+6},3} L_x^{\frac{18p}{7p-12}}(I_j)}^{3} \\
&\lesssim C\bigl( \|u_0\|_{\dot{H}^{\frac{5}{6}}} \bigr) 
\bigl( \|u\|_{X(T_{j-1})} + \eta^{3} \|u\|_{X(T_j)} \bigr).
\end{aligned}
\end{equation}

Combining \eqref{11} with \eqref{12} yields the bootstrap statement \eqref{1}. Together with the earlier considerations, choosing \( \eta \) sufficiently small and iterating over \( j = 1, \dots, J\bigl( \|u_0\|_{\dot{H}^{\frac{5}{6}}} \bigr) \) completes the proof of Theorem \ref{main thm}.

\bigskip
\noindent {\bf Acknowledgements.}





\end{document}